\documentclass[12pt]{article}
\oddsidemargin 0 mm
\topmargin -10 mm
\headheight 0 mm
\headsep 0 mm 
\textheight 246.2 mm
\textwidth 159.2 mm
\footskip 9 mm
\setlength{\parindent}{0pt}
\setlength{\parskip}{5pt plus 2pt minus 1pt}
\pagestyle{plain}
\usepackage{amssymb}
\usepackage{amsthm}
\usepackage{amsmath}
\usepackage{graphicx}
\usepackage{enumerate}

\DeclareMathOperator{\Con}{Con}
\DeclareMathOperator{\BCon}{\mathbf{Con}}

\newtheorem{theorem}{Theorem}[section]
\newtheorem{definition}[theorem]{Definition}

\newtheorem{remark}[theorem]{Remark}
\newtheorem{example}[theorem]{Example}
\newtheorem{corollary}[theorem]{Corollary}
\title{Join-semilattices whose principal filters are pseudocomplemented lattices}
\author{Ivan~Chajda and Helmut~L\"anger$^2$}
\date{}
\begin{document}

\footnotetext[1]{Support of the research by the Austrian Science Fund (FWF), project I~4579-N, and the Czech Science Foundation (GA\v CR), project 20-09869L, entitled ``The many facets of orthomodularity'', as well as by \"OAD, project CZ~02/2019, entitled ``Function algebras and ordered structures related to logic and data fusion'', and, concerning the first author, by IGA, project P\v rF~2021~030, is gratefully acknowledged.}

\footnotetext[2]{Corresponding author.}

\maketitle

\begin{abstract}
This paper deals with join-semilattices whose sections, i.e.\ principal filters, are pseudocomplemented lattices. The pseudocomplement of $a\vee b$ in the section $[b,1]$ is denoted by $a\rightarrow b$ and can be considered as the connective implication in a certain kind of intuitionistic logic. Contrary to the case of Brouwerian semilattices, sections need not be distributive lattices. This essentially allows possible applications in non-classical logics. We present a connection of the semilattices mentioned in the beginning with so-called non-classical implication semilattices which can be converted into I-algebras having everywhere defined operations. Moreover, we relate our structures to sectionally and relatively residuated semilattices which means that our logical structures are closely connected with substructural logics. We show that I-algebras form a congruence distributive, $3$-permutable and weakly regular variety.
\end{abstract}

{\bf AMS Subject Classification:} 06A12, 06D15, 03G25, 03B47

{\bf Keywords:} Join-semilattice, pseudocomplemented lattice, intuitionistic implication, \\
non-classical implication semilattice, sectionally residuated join-semilattice, relatively residuated join-semilattice, I-algebra, variety of I-algebras, congruence distributive, 3-per\-mutable, weakly regular, R-algebra.

\section{Introduction}

The concept of implication algebra was introduced by J.~C.~Abbott (\cite{A67}) under the name semi-boolean algebra. It is in fact an axiomatization of the connective implication in classical logic, which was formalized by means of a Boolean algebra already by G.~Boole. As shown by Abbott, this algebra is in fact a join-semilattice with top element $1$ where every principal filter, the so-called {\em section} $[x,1]$, is a Boolean algebra. Hence, the operation $\vee$ is everywhere defined and it can be expressed by means of implication $x\cdot y$ as $x\vee y=(x\cdot y)\cdot y$. On the other hand, the operation $\wedge$ is only partial and $x\wedge y$ is defined if and only if the elements $x,y$ have a common lower bound $p$; in such a case $x\wedge y=\big(x\cdot(y\cdot p)\big)\cdot p$ and the complement of $x$ in $[p,1]$ is $x^p=x\cdot p$.

Later on, Abbott formalized in a similar way also the connective implication in the logic of quantum mechanics based on an orthomodular lattice, see \cite{A76}. In an analogous way, implication was axiomatized in ortholattices and in orthomodular lattices without the compatibility condition by the authors and R.~Hala\v s in \cite{CHL01} and \cite{CHL04}, see also \cite{C95} and \cite{C03b} for similar cases.

Concerning implication semilattices, i.e.\ join-semilattices where every section is a Boolean algebra, and concerning the structure and properties of such algebras, the reader is referred to \cite N, \cite{NW71} and \cite{NW73}.

It is a natural question whether the connective implication in intuitionistic logic can be axiomatized in a similar way. It is known that the so-called Brouwerian semilattices can serve for this sake, see e.g.\ \cite K. In this case we obtain a join-semilattice with top element $1$ whose sections are relatively pseudocomplemented posets or lattices, see \cite{Ba} for these concepts. However, relatively pseudocomplemented lattices are distributive (see e.g.\ \cite{Ba}), this fact restricting their application in non-classical logics.

An attempt to extend the concept of relative pseudocomplementation to non-distributive lattices was made by the first author in \cite{C03a}. This kind of pseudocomplementation was called {\em sectional pseudocomplementation} and it was extended also to posets with top element $1$ by the authors and J.~Paseka in \cite{CLP}.

Hence, we can extend our investigation of intuitionistic implication also to semilattices whose sections need not be distributive lattices. We show that in this case, these join-semilattices can be converted into the so-called sectionally residuated join-semilattices which are in one-to-one correspondence with relatively residuated join-semilattices introduced by the authors and J.~K\"uhr for lattices in \cite{CKL}.

Since in our structures the lattice operation $\wedge$ is only partial, the algebras under consideration are determined by so-called {\em partial identities} and hence they form the so-called {\em partial varieties}. To avoid drawbacks with partial operations, we introduce a new ternary everywhere defined operation and we show that our non-classical implication algebra can be converted in an algebra with everywhere defined operations characterized by identities only, and hence the corresponding class forms a variety. We show that this variety has nice congruence properties.

We believe that our approach brings a new insight in this topic and enables to understand the structure of investigated objects.

\section{Basic concepts}

A {\em pseudocomplemented lattice} (see e.g.\ \cite{Ba} and \cite{Bi}) is an algebra $(L,\vee,\wedge,{}^*,0)$ of type $(2,2,1,0)$ such that $(L,\vee,\wedge,0)$ is a lattice with bottom element $0$ and for every $x\in L$, $x^*$ is the greatest element $y\in L$ satisfying $x\wedge y=0$, the so-called {\em pseudocomplement of $x$}. It is easy to see that $x\leq y$ implies $y^*\leq x^*$ and that $x\leq x^{**}$.

Let $(S,\vee,1)$ be a join-semilattice with top element $1$ whose principal filters are pseudocomplemented lattices. This means that for every $x\in S$ there exists a binary operation $\wedge_x$ and a unary operation $^x$ on $[x,1]$ such that $([x,1],\vee,\wedge_x,{}^x)$ is a pseudocomplemented lattice with bottom element $x$. It is natural to assume the following {\em compatibility condition}:
\[
x\leq y\text{ and }z,u\in[y,1]\Rightarrow z\wedge_xu=z\wedge_yu.
\]
We then also have
\[
z,u\in[x,1]\cap[y,1]\Rightarrow z\wedge_xu=z\wedge_yu
\]
since $z,u\in[x,1]\cap[y,1]$ implies $z,u\in[x,1]\cap[y,1]\cap[x\vee y,1]$ and hence
\[
z\wedge_xu=z\wedge_{x\vee y}u=z\wedge_yu.
\]
In the following $z\wedge u$ denotes the infimum of $z$ and $u$ provided it exists. In general, the binary operation $\wedge$ is only partial. But now we can prove
\[
z,u\in[x,1]\Rightarrow z\wedge_xu=z\wedge u.
\]
This can be seen as follows: Because of $z,u\in[x,1]$ we have $z\wedge_xu\leq z,u$. If $y\in S$ and $y\leq z,u$ then $z,u\in[y,1]$ and hence $y\leq z\wedge_yu=z\wedge_xu$. This shows $z\wedge_xu=z\wedge u$. Hence $z\wedge u$ exists if and only if $z$ and $u$ have a common lower bound. This motivates the following definition.

\section{Join-semilattices with pseudocomplemented sec- \\
tions}

We start with the following fundamental definition.

\begin{definition}
A {\em join-semilattice whose sections are pseudocomplemented lattices} is an ordered quintuple $(S,\vee,\wedge,(^x;x\in S),1)$ where $(S,\vee,1)$ is a join-semilattice with top element $1$, $x\wedge y$ is defined if and only if $x$ and $y$ have a common lower bound and for every $x\in S$, $([x,1],\vee,\wedge,{}^x)$ is a pseudocomplemented lattice.
\end{definition}

Here and in the following the word ``{\em section}'' means the same as ``principal filter''.

In order to avoid problems with such a number of unary operations (pseudocomplementations in sections), we introduce a new binary operation $\rightarrow$ satisfying four simple axioms. This new structure would represent an implication reduct of a certain kind of non-classical (intuitionistic) logic. Because the name {\em implication semilattice} was already used by W.~C.~Nemitz (see e.g.\ \cite N, \cite{NW71} and \cite{NW73}) for a semilattice whose sections are Boolean algebras (which formalizes the implication reduct of classical logic), we use a different name for our structure. Hence, we define.

\begin{definition}\label{def1}
An {\em non-classical implication semilattice} is an ordered quintuple $(S,\vee,\wedge,$ $\rightarrow,1)$ where $(S,\vee,1)$ is a join-semilattice with top element $1$, $x\wedge y$ exists if and only if $x$ and $y$ have a common lower bound and $\rightarrow$ is a binary operation on $S$ satisfying {\rm(1)} -- {\rm(4)}:
\begin{enumerate}[{\rm(1)}]
\item $y\leq x\rightarrow y$,
\item $(x\vee y)\wedge(x\rightarrow y)\approx y$,
\item $(x\vee y)\rightarrow y\approx x\rightarrow y$,
\item $y\leq(x\vee z)\rightarrow\big((x\vee z)\wedge(y\vee z)\big)$.
\end{enumerate}
\end{definition}

\begin{remark}
Conditions {\rm(1)} and {\rm(3)} are identities whereas {\rm(2)} and {\rm(4)} can be considered as identities where the operation $\wedge$ is only partial. Hence the class of non-classical implication semilattices forms a partial variety.
\end{remark}

Observe that contrary to our case the sections of a relatively residuated join-semilattice need not be lattices and hence the meet of two elements which have a common lower bound need not exist.

For every join-semilattice $\mathbf S=(S,\vee,\wedge,(^x;x\in S),1)$ whose sections are pseudocomplemented lattices put $\mathbb I(\mathbf S):=(S,\vee,\wedge,\rightarrow,1)$ where $\rightarrow$ denotes the binary operation on $S$ defined by $x\rightarrow y:=(x\vee y)^y$ for all $x,y\in S$.

We can relate this concept with that one defined above.

\begin{theorem}\label{th1}
Let $\mathbf S=(S,\vee,\wedge,(^x;x\in S),1)$ be a join-semilattice whose sections are pseudocomplemented lattices. Then $\mathbb I(\mathbf S)=(S,\vee,\wedge,\rightarrow,1)$ is a non-classical implication semilattice satisfying {\rm(5)} -- {\rm(9)}:
\begin{enumerate}
\item[{\rm(5)}] $x\leq y\Leftrightarrow x\rightarrow y=1$,
\item[{\rm(6)}] $x\leq y\Rightarrow y\rightarrow z\leq x\rightarrow z$,
\item[{\rm(7)}] $x\leq(x\rightarrow y)\rightarrow y$,
\item[{\rm(8)}] $\big((x\rightarrow y)\rightarrow y\big)\rightarrow y\approx x\rightarrow y$,
\item[{\rm(9)}] $1\rightarrow x\approx x$.
\end{enumerate}
\end{theorem}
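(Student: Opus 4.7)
The plan is to unpack the definition $x\rightarrow y:=(x\vee y)^y$ and reduce every statement to a familiar fact about the pseudocomplemented lattice $([y,1],\vee,\wedge,{}^y)$ (or $[z,1]$, $[x,1]$, etc.). The standard facts I intend to use are: the defining property $a\wedge a^y=y$ in $[y,1]$; antitonicity ($a\le b$ in $[y,1]$ implies $b^y\le a^y$); $a\le a^{yy}$ and $a^{yyy}=a^y$; and $y^y=1$, $1^y=y$. The compatibility condition discussed in Section~2 lets me identify meets computed inside any section with the (partial) meet $\wedge$ on $S$, so I need not keep track of which section a given meet is taken in.

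For (1) and (3), I expect immediate verification: $(x\vee y)^y\in[y,1]$ gives (1), and $(x\vee y)\rightarrow y=((x\vee y)\vee y)^y=(x\vee y)^y=x\rightarrow y$ gives (3). For (2), I rewrite the left-hand side as $(x\vee y)\wedge_y(x\vee y)^y$, which is the bottom $y$ of $[y,1]$ by the defining axiom of the pseudocomplement. Axioms (5)--(9) I handle by the same translation: (5) $x\le y$ makes $x\vee y=y$ and then $y^y=1$, while the converse follows from (2); (6) is antitonicity of ${}^z$ applied to $x\vee z\le y\vee z$; (7) is the inequality $(x\vee y)\le(x\vee y)^{yy}$ together with $x\le x\vee y$; (8) is the triple-negation identity $(x\vee y)^{yyy}=(x\vee y)^y$; and (9) is $1^x=x$.

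The step that requires an extra idea is (4). I would set $w:=(x\vee z)\wedge(y\vee z)$, which exists because $z$ is a common lower bound. Since $w\le x\vee z$, I can simplify
\[
(x\vee z)\rightarrow w=\bigl((x\vee z)\vee w\bigr)^w=(x\vee z)^w,
\]
the pseudocomplement of $x\vee z$ taken inside the section $[w,1]$. The task then reduces to showing $y\le(x\vee z)^w$. The key observation is to use $y\vee z$ as a witness: it lies in $[w,1]$ (because $w\le y\vee z$), and $(x\vee z)\wedge(y\vee z)=w$ by construction, so $y\vee z$ satisfies the defining inequality of the pseudocomplement and therefore $y\vee z\le(x\vee z)^w$. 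Since $y\le y\vee z$, axiom (4) follows.

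The main obstacle is spotting the witness $y\vee z$ for (4); once that is in hand, the remaining axioms are direct translations between the operation $\rightarrow$ and pseudocomplementation inside the appropriate section, and no genuinely new ingredient is needed.
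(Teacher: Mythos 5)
Your proposal is correct and follows essentially the same route as the paper's proof: each axiom is translated into a standard fact about pseudocomplementation in the appropriate section, and for the key axiom (4) your witness $y\vee z$, lying in the section $[(x\vee z)\wedge(y\vee z),1]$ and meeting $x\vee z$ exactly in the bottom of that section, is precisely the element the paper uses. The only cosmetic difference is that you obtain (8) from the triple-negation identity $a^{yyy}=a^y$, while the paper combines (6) and (7) with antisymmetry; these amount to the same argument.
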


\begin{proof}
Let $a,b,c\in S$.
\begin{enumerate}[(1)]
\item $b\leq(a\vee b)^b=a\rightarrow b$
\item $(a\vee b)\wedge(a\rightarrow b)=(a\vee b)\wedge(a\vee b)^b=b$
\item $(a\vee b)\rightarrow b=\big((a\vee b)\vee b\big)^b=(a\vee b)^b=a\rightarrow b$
\item
Since $b\vee c\in[(a\vee c)\wedge(b\vee c),1]$ and $(a\vee c)\wedge(b\vee c)=(a\vee c)\wedge(b\vee c)$ we have
\[
b\vee c\leq(a\vee c)^{(a\vee c)\wedge(b\vee c)}
\]
and hence
\begin{align*}
b & \leq b\vee c\leq(a\vee c)^{(a\vee c)\wedge(b\vee c)}=\Big(a\vee c\vee\big((a\vee c)\wedge(b\vee c)\big)\Big)^{(a\vee c)\wedge(b\vee c)}= \\
  & =(a\vee c)\rightarrow\big((a\vee c)\wedge(b\vee c)\big)
\end{align*}
\item If $a\leq b$ then $a\rightarrow b=(a\vee b)^b=b^b=1$. If, conversely, $a\rightarrow b=1$ then $a\leq a\vee b=(a\vee b)\wedge1=(a\vee b)\wedge(a\rightarrow b)=(a\vee b)\wedge(a\vee b)^b=b$.
\item If $a\leq b$ then $b\rightarrow c=(b\vee c)^c\leq(a\vee c)^c=a\rightarrow c$.
\item $a\leq a\vee b\leq(a\vee b)^{bb}=\big((a\vee b)^b\vee b\big)^b=(a\rightarrow b)\rightarrow b$
\item By (7) and (6) we have
\[
a\rightarrow b\leq\big((a\rightarrow b)\rightarrow b\big)\rightarrow b\leq a\rightarrow b,
\]
i.e.
\[
\big((a\rightarrow b)\rightarrow b\big)\rightarrow b=a\rightarrow b.
\]
\item $1\rightarrow a=(1\vee a)^a=1^a=a$
\end{enumerate}
\end{proof}

The following example shows that in a join-semilattice whose sections are pseudocomplemented lattices these lattices need neither be distributive nor modular.

\begin{example}
The join-semilattice depictured in Figure~2

\vspace*{-2mm}

\begin{center}
\setlength{\unitlength}{7mm}
\begin{picture}(5,5)
\put(2,1){\circle*{.3}}
\put(1,2.5){\circle*{.3}}
\put(3,2){\circle*{.3}}
\put(3,3){\circle*{.3}}
\put(4,3){\circle*{.3}}
\put(2,4){\circle*{.3}}
\put(2,1){\line(-2,3)1}
\put(2,1){\line(1,1)1}
\put(2,4){\line(-2,-3)1}
\put(2,4){\line(1,-1)1}
\put(2,4){\line(2,-1)2}
\put(3,2){\line(0,1)1}
\put(1.85,.25){$0$}
\put(.3,2.35 ){$b$}
\put(3.4,1.85){$a$}
\put(3.4,2.85){$c$}
\put(4.4,2.85){$d$}
\put(1.85,4.4){$1$}
\put(1.2,-.75){{\rm Fig.\ 2}}
\end{picture}
\end{center}

\vspace*{4mm}

is a join-semilattice whose sections are pseudocomplemented lattices and the operation tables for $\rightarrow$ and $\wedge$ look as follows:
\[
\begin{array}{c|cccccc}
\rightarrow & 0 & a & b & c & d & 1 \\
\hline
     0      & 1 & 1 & 1 & 1 & 1 & 1\\
     a      & b & 1 & b & 1 & d & 1 \\
     b      & c & a & 1 & c & d & 1 \\
     c      & b & a & b & 1 & d & 1 \\
		 d      & 0 & a & b & c & 1 & 1 \\
     1      & 0 & a & b & c & d & 1
\end{array}
\quad\quad\quad
\begin{array}{c|cccccc}
\wedge & 0 & a & b & c & d & 1 \\
\hline
   0   & 0 & 0 & 0 & 0 & - & 0 \\
   a   & 0 & a & 0 & a & - & a \\
   b   & 0 & 0 & b & 0 & - & b \\
   c   & 0 & a & 0 & c & - & c \\
	 d   & - & - & - & - & d & d \\
   1   & 0 & a & b & c & d & 1
\end{array}
\]
\end{example}

\vspace*{10mm}

In what follows we show the converse way from a non-classical implication semilattice to a join-semilattice whose sections are pseudocomplemented lattices. For this sake, let us introduce the following concept.

For every non-classical implication semilattice $\mathbf I=(I,\vee,\wedge,\rightarrow,1)$ put
\[
\mathbb S(\mathbf I):=(I,\vee,\wedge,(^x;x\in I),1)
\]
where for every $x\in I$, $^x$ denotes the unary operation on $[x,1]$ defined by $y^x:=y\rightarrow x$ for all $y\in[x,1]$.

\begin{theorem}\label{th2}
Let $\mathbf I=(I,\vee,\wedge,\rightarrow,1)$ be a non-classical implication semilattice. Then $\mathbb S(\mathbf I)$ is a join-semi\-lat\-tice whose sections are pseudocomplemented lattices.
\end{theorem}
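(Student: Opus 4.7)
The plan is to fix an arbitrary $x \in I$ and verify that $([x,1], \vee, \wedge, {}^x)$ is a pseudocomplemented lattice with bottom element $x$. The underlying join-semilattice structure, together with the domain convention for $\wedge$, is inherited from $\mathbf{I}$; that $x$ is the bottom of $[x,1]$ is by construction. So the nontrivial content is that sections are actually lattices and that $y^x := y \rightarrow x$ behaves as a pseudocomplement in each section.

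First I would check that $[x,1]$ really is a lattice. Any two elements $y, z \in [x,1]$ admit $x$ as a common lower bound, so by the hypothesis on $\mathbf{I}$ the meet $y \wedge z$ exists in $I$; because $x$ is below both operands it is below their meet, hence $y \wedge z \in [x,1]$, and since $[x,1]$ is a subposet of $I$ this element is automatically also the meet in $[x,1]$.

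Next I would unpack the pseudocomplement claim: for any $y \in [x,1]$, the element $y^x = y \rightarrow x$ should be the largest member of $[x,1]$ whose meet with $y$ is $x$. That $y \rightarrow x$ in fact lies in $[x,1]$ follows from axiom (1) with the two variables swapped, which yields $x \leq y \rightarrow x$. That $y \wedge (y \rightarrow x) = x$ is axiom (2) applied to the pair $(y, x)$, using $y \vee x = y$ (since $x \leq y$) to collapse the left factor.

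The real content is the maximality clause. Suppose $u \in [x,1]$ satisfies $y \wedge u = x$; the goal is $u \leq y \rightarrow x$. Here I would instantiate axiom (4) with its formal variables $x, y, z$ replaced, respectively, by our $y, u, x$. Because $x \leq y$ and $x \leq u$, both $y \vee x$ and $u \vee x$ simplify to $y$ and $u$, so the instance of (4) reads $u \leq y \rightarrow (y \wedge u) = y \rightarrow x$, as required. The principal obstacle is spotting this particular substitution into axiom (4) — indeed, axiom (4) appears to have been tailored precisely to supply this maximality step — while all the other clauses amount to routine unpacking modulo the absorption $x \vee w = w$ for $w \in [x,1]$.
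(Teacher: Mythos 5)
Your proof is correct and follows essentially the same route as the paper: the maximality clause via the substitution $(x,y,z)\mapsto(y,u,x)$ in axiom (4) and the meet equation $y\wedge(y\rightarrow x)=x$ via axiom (2) are exactly the paper's two steps (the paper packages them as the equivalence $a\wedge b=c\Leftrightarrow b\leq a\rightarrow c$ for $b\in[c,1]$). Your explicit check that each section is in fact a lattice is a small point the paper leaves implicit, but otherwise the arguments coincide.
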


\begin{proof}
Let $\mathbb S(\mathbf I)=(I,\vee,\wedge,(^x;x\in I),1)$ and $a,b,c\in I$ with $c\leq a,b$. If $a\wedge b=c$ then according to (4) we obtain
\[
b\leq(a\vee c)\rightarrow\big((a\vee c)\wedge(b\vee c)\big)=a\rightarrow(a\wedge b)=a\rightarrow c=a^c.
\]
If, conversely, if $b\leq a^c$ then
\[
c\leq a\wedge b\leq a\wedge a^c\leq(a\vee c)\wedge(a\rightarrow c)=c
\]
according to (2) and hence $a\wedge b=c$. Hence, for each $a\in[c,1]$ the element $a\rightarrow c$ is the pseudocomplement of $a$ in this interval.
\end{proof}

We justify our conversions by the following result.

\begin{theorem}
The correspondence described in Theorems~\ref{th1} and \ref{th2} is one-to-one.
\end{theorem}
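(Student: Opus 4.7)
The plan is to verify the two round-trip equalities $\mathbb S(\mathbb I(\mathbf S))=\mathbf S$ and $\mathbb I(\mathbb S(\mathbf I))=\mathbf I$ by unwinding the definitions of the two constructions. In both cases the join-semilattice reduct $(S,\vee,1)$ and the partial meet $\wedge$ are carried along unchanged, so only the operations that are genuinely created by the transitions (the family of pseudocomplementations on one side, the binary implication on the other) need to be compared.

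First I would start from a join-semilattice $\mathbf S=(S,\vee,\wedge,(^x;x\in S),1)$ whose sections are pseudocomplemented lattices. Setting $x\rightarrow y:=(x\vee y)^y$ produces $\mathbb I(\mathbf S)$, and then applying $\mathbb S$ yields, for each $x\in S$, a unary operation $\square^x$ on $[x,1]$ given by $y\,\square^x:=y\rightarrow x=(y\vee x)^x$. Since $y\in[x,1]$ gives $y\vee x=y$, this reduces to $y\,\square^x=y^x$, so the reconstructed pseudocomplementations on all sections coincide with the original ones.

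Conversely, starting from a non-classical implication semilattice $\mathbf I=(I,\vee,\wedge,\rightarrow,1)$, the construction $\mathbb S(\mathbf I)$ equips each section $[x,1]$ with the pseudocomplementation $y^x:=y\rightarrow x$ (which is legitimate by Theorem~\ref{th2}). Applying $\mathbb I$ then yields a new implication $\Rightarrow$ defined by
\[
x\Rightarrow y:=(x\vee y)^y=(x\vee y)\rightarrow y.
\]
By axiom (3) of Definition~\ref{def1} the right-hand side equals $x\rightarrow y$, so the reconstructed implication coincides with the original one.

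The only place where a nontrivial axiom enters is the second half, and there axiom (3) is precisely what is needed; the first half is a pure calculation exploiting $y\vee x=y$ in $[x,1]$. Hence the assignments $\mathbf S\mapsto\mathbb I(\mathbf S)$ and $\mathbf I\mapsto\mathbb S(\mathbf I)$ are mutually inverse, and the correspondence described in Theorems~\ref{th1} and \ref{th2} is one-to-one.
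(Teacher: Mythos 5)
Your proposal is correct and follows essentially the same route as the paper: the first round trip reduces to the observation that $y\vee x=y$ on $[x,1]$, and the second uses axiom (3) to identify $(x\vee y)\rightarrow y$ with $x\rightarrow y$, exactly as in the paper's own argument.
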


\begin{proof}
If $\mathbf S=\big(S,\vee,\wedge,(^x;x\in S),1\big)$ is a join-semilattice whose sections are pseudocomplemented lattices and
\begin{align*}
                   \mathbb I(\mathbf S) & =(S,\vee,\wedge,\rightarrow,1), \\
\mathbb S\big(\mathbb I(\mathbf S)\big) & =\big(S,\vee,\wedge,(_x;x\in S),1\big)
\end{align*}
then $y_x=y\rightarrow x=(y\vee x)^x=y^x$ for all $x\in S$ and $y\in[x,1]$ and hence $\mathbb S\big(\mathbb I(\mathbf S)\big)=\mathbf S$. If, conversely, $\mathbf I=(I,\vee,\wedge,\rightarrow,1)$ is a non-classical implication semilattice and
\begin{align*}
                   \mathbb S(\mathbf I) & =\big(I,\vee,\wedge,(^x;x\in S),1\big), \\
\mathbb I\big(\mathbb S(\mathbf I)\big) & =(I,\vee,\wedge,\Rightarrow,1)
\end{align*}
then $x\Rightarrow y\approx(x\vee y)^y\approx(x\vee y)\rightarrow y\approx x\rightarrow y$ according to (3) and hence $\mathbb I\big(\mathbb S(\mathbf I)\big)=\mathbf I$ where $(x\vee y)\Rightarrow y\approx\big((x\vee y)\vee y\big)^y\approx(x\vee y)^y$.
\end{proof}

\section{Sectionally and relatively residuated join-semi\-lat\-tices}

The aim of this section is to establish a relationship between the structures defined in the previous section and residuated structures which formalize certain substructural logics.

We start with definitions of our key concepts relating our implication algebras with a certain kind of residuation.

\begin{definition}\label{def3}
A {\em sectionally residuated join-semilattice} is an ordered quintuple $(S,\vee,$ $(\odot_x;x\in S),\rightarrow,1)$ such that $(S,\vee,1)$ is a join-semlattice with top element $1$, for each $x\in S$, $([x,1],\odot_x,1)$ is a commutative monoid, $\rightarrow$ is a binary operation on $S$ and for all $x,y,z,u\in S$ the following hold:
\begin{enumerate}[{\rm(i)}]
\item if $z\leq u\leq x,y$ then $x\odot_zy=x\odot_uy$ {\rm(}{\em compatibility condition}{\rm)},
\item if $u\leq x,y,z$ and $x\leq y$ then $x\odot_uz\leq y\odot_uz$ {\rm(}{\em monotonicity}{\rm)},
\item $(x\vee z)\odot_z(y\vee z)\leq z$ if and only if $x\vee z\leq y\rightarrow z$ {\rm(}{\em sectional adjointness}{\rm)},
\item $(x\vee y)\rightarrow y\approx x\rightarrow y$.
\end{enumerate}
\end{definition}

The next concept we borrow from our previous paper \cite{CKL} where, however, it is introduced for lattices and hence the meet-operation is everywhere defined in the original definition.

\begin{definition}\label{def2}
A {\em relatively residuated join-semilattice} is a partial algebra $\mathbf R=(R,\vee,$ $\odot,$ $\rightarrow,1)$ of type $(2,2,2,0)$ where $x\odot y$ is defined if and only if $x$ and $y$ have a common lower bound and in this case $z\leq x\odot y$ for all $z\in R$ with $z\leq x,y$ and $\rightarrow$ is a binary operation on $R$ such that for all $x,y,z\in R$ the following hold:
\begin{enumerate}
\item[{\rm(10)}] $(R,\vee,1)$ is a join-semilattice with top element $1$,
\item[{\rm(11)}] $x\odot1\approx1\odot x\approx x$,
\item[{\rm(12)}] $x\odot y=y\odot x$ whenever $x$ and $y$ have a common lower bound,
\item[{\rm(13)}] $(x\odot y)\odot z=x\odot(y\odot z)$ whenever $x$, $y$ and $z$ have a common lower bound,
\item[{\rm(14)}] $x\leq y$ implies $x\odot z\leq y\odot z$ whenever $x$ and $z$ have a common lower bound {\rm(}{\em monotonicity}{\rm)},
\item[{\rm(15)}] $(x\vee z)\odot(y\vee z)\leq z$ if and only if $x\vee z\leq y\rightarrow z$ {\rm(}{\em relative adjointness}{\rm)},
\item[{\rm(16)}] $(x\vee y)\rightarrow y\approx x\rightarrow y$.
\end{enumerate}
$\mathbf R$ is called {\em divisible} if it satisfies the identity
\[
(x\vee y)\odot(x\rightarrow y)\approx y.
\]
\end{definition}

In the next theorem we describe several basic properties of relatively residuated join-semilattices.

\begin{theorem}\label{th6}
Let $\mathbf S=(S,\vee,\odot,\rightarrow,1)$ be a relatively residuated join-semilattice and $a,b,c\in S$. Then the following hold:
\begin{enumerate}[{\rm(i)}]
\item $a\leq b\Leftrightarrow a\rightarrow b=1$,
\item $a\odot b\leq a$,
\item if $a\wedge b$ exists then $a\odot b\leq a\wedge b$,
\item $a\leq b\rightarrow a$,
\item $a\odot(a\rightarrow b)\leq(a\vee b)\odot(a\rightarrow b)\leq b$,
\item $a\leq(a\rightarrow b)\rightarrow b$,
\item $a\leq b\Rightarrow b\rightarrow c\leq a\rightarrow c$,
\item $\big((a\rightarrow b)\rightarrow b\big)\rightarrow b=a\rightarrow b$.
\end{enumerate}
\end{theorem}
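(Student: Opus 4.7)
The plan is to prove the eight statements in an order that reuses earlier parts, using as sole tools the relative adjointness (15), the greatest-lower-bound property of $\odot$ from Definition~\ref{def2}, and monotonicity (14). I would take the order (ii)$\rightarrow$(iii)$\rightarrow$(iv)$\rightarrow$(i)$\rightarrow$[second half of (v) and (vi) together]$\rightarrow$[first half of (v)]$\rightarrow$(vii)$\rightarrow$(viii). For (ii), once $a\odot b$ is defined (so $a,b$ share a common lower bound), (14) applied to $b\leq 1$ with $z=a$ gives $b\odot a\leq 1\odot a=a$, and (11)--(12) finish. Then (iii) is immediate: $a\odot b\leq a$ and, by symmetry, $a\odot b\leq b$, so $a\odot b\leq a\wedge b$ whenever the meet exists. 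For (iv) I apply (15) with $x=a,\,y=b,\,z=a$: the inequality $a\odot(a\vee b)\leq a$, which is exactly (ii), becomes equivalent to $a\leq b\rightarrow a$.

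For (i), I would first establish $b\rightarrow b=1$ by applying (15) with $x=1,\,y=z=b$: the left side collapses to $1\odot b=b\leq b$, so $1\leq b\rightarrow b$. The forward direction then follows since $a\leq b$ gives $a\vee b=b$ and (16) yields $a\rightarrow b=b\rightarrow b=1$. For the converse, if $a\rightarrow b=1$ then (15) with $x=y=a,\,z=b$ gives $(a\vee b)\odot(a\vee b)\leq b$; the greatest-lower-bound property forces $a\leq a\odot(a\vee b)$, and one use of (14) yields $a\odot(a\vee b)\leq(a\vee b)\odot(a\vee b)\leq b$. For the key pair (vi) and the second half of (v), I would apply (15) with $x=a\rightarrow b,\,y=a,\,z=b$; using (iv) to collapse $(a\rightarrow b)\vee b$ to $a\rightarrow b$, the trivial inequality $a\rightarrow b\leq a\rightarrow b$ yields $(a\vee b)\odot(a\rightarrow b)\leq b$. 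Feeding this back into (15) with $x=a,\,y=a\rightarrow b,\,z=b$ (collapsing $(a\rightarrow b)\vee b$ again) gives $a\vee b\leq(a\rightarrow b)\rightarrow b$, hence (vi). The first inequality in (v) is then just (14), whenever $a\odot(a\rightarrow b)$ is defined.

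For (vii), from $a\leq b$ I would invoke the second half of (v) at $b,c$ to obtain $(b\vee c)\odot(b\rightarrow c)\leq c$; monotonicity through $a\vee c\leq b\vee c$ (the pair $(a\vee c,\,b\rightarrow c)$ shares $c$ as a lower bound by (iv)) then gives $(a\vee c)\odot(b\rightarrow c)\leq c$, and re-reading this via (15) with $x=b\rightarrow c,\,y=a,\,z=c$ yields $b\rightarrow c\leq a\rightarrow c$. Statement (viii) combines (vi) and (vii): (vi) gives $a\leq(a\rightarrow b)\rightarrow b$, and (vii) converts this into $((a\rightarrow b)\rightarrow b)\rightarrow b\leq a\rightarrow b$, while the reverse inequality is (vi) applied with $a\rightarrow b$ in place of $a$. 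The main obstacle is the bookkeeping around the partiality of $\odot$: every use of (14) requires checking a common lower bound. Fortunately, products of the form $(x\vee z)\odot(y\vee z)$ appearing in (15) always share $z$, so those uses are automatic; the only genuinely conditional step is the first inequality in (v), which should be read as: whenever $a\odot(a\rightarrow b)$ is defined, the inequality holds.
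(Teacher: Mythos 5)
Your proof is correct and follows essentially the same strategy as the paper's: everything is extracted from relative adjointness (15) together with the lower-bound property of $\odot$, commutativity and monotonicity, in nearly the same order of dependencies. The only (harmless) deviations are cosmetic --- you obtain the forward direction of (i) via $b\rightarrow b=1$ and (16) where the paper uses a single chain of equivalences, and you derive (vii) by applying monotonicity directly to (v) instead of routing through (vi); both variants check out, including the common-lower-bound verifications needed for each use of (14).
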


\begin{proof}
\
\begin{enumerate}[(i)]
\item The following are equivalent:
\begin{align*}
                      a & \leq b, \\
(1\vee b)\odot(a\vee b) & \leq b, \\
                1\vee b & \leq a\rightarrow b, \\
         a\rightarrow b & =1.
\end{align*}
\item By commutativity and monotonicity of $\odot$ we have $a\odot b=b\odot a\leq1\odot a=a$.
\item This follows from (ii) and from commutativity of $\odot$.
\item We have $(a\vee b)\odot a\leq a$ according to (ii). Now the following are equivalent:
\begin{align*}
(a\vee a)\odot(b\vee a) & \leq a, \\
                a\vee a & \leq b\rightarrow a, \\
                      a & \leq b\rightarrow a.
\end{align*}
\item We have $b\leq a\rightarrow b$ according to (iv). Now by (iv) the following are equivalent:
\begin{align*}
                        (a\rightarrow b)\vee b & \leq a\rightarrow b, \\
\big((a\rightarrow b)\vee b\big)\odot(a\vee b) & \leq b, \\
                (a\vee b)\odot(a\rightarrow b) & \leq b.
\end{align*}
By monotonicity of $\odot$ we have $a\odot(a\rightarrow b)\leq(a\vee b)\odot(a\rightarrow b)$.
\item We have $(a\vee b)\odot(a\rightarrow b)\leq b$ according to (v). Now by (iv) the following are equivalent:
\begin{align*}
(a\vee b)\odot\big((a\rightarrow b)\vee b\big) & \leq b, \\
                                       a\vee b & \leq(a\rightarrow b)\rightarrow b.
\end{align*}
\item Assume $a\leq b$. Then $a\vee c\leq(b\rightarrow c)\rightarrow c$ according to (iv) and (vi). Now the following are equivalent:
\begin{align*}
                                       a\vee c & \leq(b\rightarrow c)\rightarrow c, \\
(a\vee c)\odot\big((b\rightarrow c)\vee c\big) & \leq c, \\
\big((b\rightarrow c)\vee c\big)\odot(a\vee c) & \leq c, \\
                        (b\rightarrow c)\vee c & \leq a\rightarrow c, \\
                                b\rightarrow c & \leq a\rightarrow c.
\end{align*}
\item According to (vi) and (vii) we have
\[
a\rightarrow b\leq\big((a\rightarrow b)\rightarrow b\big)\rightarrow b\leq a\rightarrow b
\]
proving (viii).
\end{enumerate}
\end{proof}

In next theorem we show how the class of relatively residuated join-semilattices can be described without using of relative adjointness.

\begin{theorem}\label{th11}
Let $\mathbf R=(R,\vee,$ $\odot,$ $\rightarrow,1)$ be a partial algebra of type $(2,2,2,0)$ where $x\odot y$ is defined if and only if $x$ and $y$ have a common lower bound and in this case $z\leq x\odot y$ for all $z\in R$ with $z\leq x,y$ and $\rightarrow$ is a binary operation on $R$ such that for all $x,y,z\in R$ {\rm(10)} -- {\rm(14)} and {\rm(16)} hold. Then $\mathbf R$ is a relatively residuated join-semilattice if and only if it satisfies the following identities:
\begin{enumerate}
\item[{\rm(17)}] $x\vee z\leq y\rightarrow\Big(\big((x\vee z)\odot(y\vee z)\big)\vee z\Big)$,
\item[{\rm(18)}] $x\leq y\rightarrow x$,
\item[{\rm(19)}] $(x\vee y)\odot(x\rightarrow y)\leq y$.
\end{enumerate}
\end{theorem}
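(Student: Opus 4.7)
The plan is to prove the equivalence by handling each direction separately, relying on the explicit form of relative adjointness (15) in both.

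For the forward implication (assuming (15), derive (17), (18), (19)): axioms (18) and (19) are already available as Theorem \ref{th6}(iv) and the second inequality in Theorem \ref{th6}(v), both of which are consequences of (15). The only nontrivial identity is (17). I would set $w:=\big((x\vee z)\odot(y\vee z)\big)\vee z$. The key observation is that, by Theorem \ref{th6}(ii) (which only needs (11), (12), (14), not (15)), $(x\vee z)\odot(y\vee z)\leq x\vee z$ and $(x\vee z)\odot(y\vee z)\leq y\vee z$. Therefore $x\vee w=x\vee z$ and $y\vee w=y\vee z$, so $(x\vee w)\odot(y\vee w)=(x\vee z)\odot(y\vee z)\leq w$ by the very definition of $w$. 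Applying (15) with $z$ replaced by $w$ then yields $x\vee w\leq y\rightarrow w$, and since $x\vee z\leq x\vee w$, (17) follows.

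For the backward implication (assuming (17), (18), (19), derive (15)), both directions of the biconditional go through by straightforward manipulation. Suppose first that $x\vee z\leq y\rightarrow z$. By (19) applied to $y$ and $z$, $(y\vee z)\odot(y\rightarrow z)\leq z$. The element $z$ is a common lower bound for $x\vee z$, $y\vee z$ and $y\rightarrow z$ (the last by (18)), so monotonicity (14) together with commutativity (12) gives
\[
(x\vee z)\odot(y\vee z)\leq(y\rightarrow z)\odot(y\vee z)=(y\vee z)\odot(y\rightarrow z)\leq z.
\]
Conversely, assume $(x\vee z)\odot(y\vee z)\leq z$; the standing assumption on the partial operation $\odot$ also forces $(x\vee z)\odot(y\vee z)\geq z$, so $\big((x\vee z)\odot(y\vee z)\big)\vee z=z$, and then (17) immediately gives $x\vee z\leq y\rightarrow z$.

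The main obstacle is the forward direction for (17): one has to notice the absorption $x\vee w=x\vee z$ and $y\vee w=y\vee z$, which converts the goal into the tautological inequality $(x\vee z)\odot(y\vee z)\leq w$ and lets (15), applied at the new ``floor'' $w$ in place of $z$, finish the argument. Once that observation is made, everything else is routine bookkeeping with monotonicity, commutativity and (18).
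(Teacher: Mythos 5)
Your proof is correct and follows essentially the same route as the paper: for (17) you introduce the auxiliary element $w=\big((x\vee z)\odot(y\vee z)\big)\vee z$ (the paper's $d$), observe the absorptions $x\vee w=x\vee z$ and $y\vee w=y\vee z$ via Theorem~\ref{th6}(ii) and (12), and apply (15) at the floor $w$; the converse direction uses (18), (19), (12) and (14) exactly as in the paper. The only (harmless) difference is that you make explicit why $\big((x\vee z)\odot(y\vee z)\big)\vee z=z$ when the product is below $z$, a step the paper leaves implicit.
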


\begin{proof}
Let $a,b,c,d\in R$. If $\mathbf R$ is a relatively residuated join-semilattice then (18) and (19) follow from Theorem~\ref{th6}, and if $d:=\big((a\vee c)\odot(b\vee c)\big)\vee c$ then $a\vee d=a\vee c$ and $b\vee d=b\vee c$ because of (ii) of Theorem~\ref{th6} and (12) whence
\[
(a\vee d)\odot(b\vee d)=(a\vee c)\odot(b\vee c)\leq d
\]
by definition of $d$ which implies
\[
a\vee c=a\vee d\leq b\rightarrow d=b\rightarrow\Big(\big((a\vee c)\odot(b\vee c)\big)\vee c\Big)
\]
by (15) showing (17). If, conversely, $\mathbf R$ satisfies (17) -- (19) then, if $(a\vee c)\odot(b\vee c)\leq c$ then $a\vee c\leq b\rightarrow c$ according to (17), and if, conversely, $a\vee c\leq b\rightarrow c$ then
\[
(a\vee c)\odot(b\vee c)\leq(b\rightarrow c)\odot(b\vee c)=(b\vee c)\odot(b\rightarrow c)\leq c
\]
according to (14), (12) and (19) proving (15), thus showing that $\mathbf R$ is a relatively residuated join-semilattice.
\end{proof}

Theorem~\ref{th11} implies that the class of relatively residuated join-semilattices is determined by partial identities and hence forms a partial variety.
 
The following results show that the concepts just defined can be converted into each other.

\begin{theorem}\label{th5}
\
\begin{enumerate}[{\rm(i)}]
\item Let $\mathbf S=(S,\vee,(\odot_x;x\in S),\rightarrow,1)$ be a sectionally residuated join-semilattice and define a partial binary operation $\odot$ on $S$ as follows: If $x$ and $y$ have a common lower bound $z$ then $x\odot y:=x\odot_zy$. Then $\odot$ is well-defined and $\mathbb R(\mathbf S):=(S,\vee,\odot,\rightarrow,1)$ is a relatively residuated join-semilattice.
\item Let $\mathbf R=(R,\vee,\odot,\rightarrow,1)$ be a relatively residuated join-semilattice and put $\odot_x:=\odot|[x,1]$ for all $x\in S$. Then $\mathbb S(\mathbf R):=(R,\vee,(\odot_x;x\in R),\rightarrow,1)$ is a sectionally residuated join-semilattice.
\end{enumerate}
\end{theorem}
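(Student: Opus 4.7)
The plan is to treat the two directions symmetrically: in (i) I convert the family $(\odot_x;x\in S)$ of total section operations into a single partial operation $\odot$ on $S$, and in (ii) I restrict the global partial operation $\odot$ to each section. The forward direction carries the real content; the backward direction is essentially routine bookkeeping.

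For (i) I first verify that $\odot$ is well-defined. If $z_1$ and $z_2$ are both common lower bounds of $x$ and $y$, then $z_1\vee z_2$ exists in the join-semilattice and still satisfies $z_1\vee z_2\leq x,y$. The compatibility condition (i) of Definition~\ref{def3} applied twice (first with $z_1\leq z_1\vee z_2\leq x,y$, then with $z_2\leq z_1\vee z_2\leq x,y$) yields $x\odot_{z_1}y=x\odot_{z_1\vee z_2}y=x\odot_{z_2}y$, so $\odot$ does not depend on the choice of common lower bound. I also note that whenever $z\leq x,y$ the product $x\odot y=x\odot_zy$ lies in $[z,1]$ because $([z,1],\odot_z,1)$ is a monoid, which establishes the property $z\leq x\odot y$ demanded by Definition~\ref{def2}. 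After that, axioms (10)--(14) and (16) for $\mathbb R(\mathbf S)$ are read off section-by-section: commutativity (12) and the identity law (11) come from each $([x,1],\odot_x,1)$ being a commutative monoid (using $x$ as a common lower bound of $x$ and $1$), associativity (13) from working inside the section $[u,1]$ determined by a common lower bound $u$ of $x,y,z$ (noting that $x\odot_u y\in[u,1]$), monotonicity (14) from axiom (ii) of Definition~\ref{def3}, and (16) directly from axiom (iv). Relative adjointness (15) is essentially the same statement as sectional adjointness (iii), read through $\odot_z$; here one only has to check that all the products occurring in (15) are defined, which follows because $z$ is a common lower bound of each pair.

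For (ii), on each section $[x,1]$ every two elements share the lower bound $x$, so $\odot$ is everywhere defined there, and closure $y\odot z\in[x,1]$ for $y,z\in[x,1]$ follows from the built-in condition that any common lower bound of $y,z$ is below $y\odot z$. Commutativity, associativity, and the identity law on $[x,1]$ are immediate restrictions of (11)--(13). This makes $([x,1],\odot_x,1)$ a commutative monoid. The compatibility axiom (i) of Definition~\ref{def3} is automatic since $\odot_z$ and $\odot_u$ are both restrictions of the same $\odot$. Monotonicity (ii) is a direct restriction of (14), sectional adjointness (iii) is relative adjointness (15) read inside $[z,1]$, and axiom (iv) is literally (16).

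The main obstacle is the well-definedness step in (i); once that is in hand, everything else reduces to transcribing one axiom into the other via the obvious common-lower-bound argument. A small secondary point worth being careful about is that in verifying the monoid axioms on $[x,1]$ (for (ii)) and associativity in the global partial algebra (for (i)), one has to check at each stage that the intermediate products have the common lower bound needed for the next product to be defined; this always works because each section is closed under its own operation.
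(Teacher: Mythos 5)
Your proposal is correct and follows essentially the same route as the paper: well-definedness of $\odot$ via the join of two common lower bounds together with the compatibility condition, then a section-by-section transcription of the monoid, monotonicity and adjointness axioms in each direction. The extra points you flag (that $z\leq x\odot y$ holds because $x\odot_zy\in[z,1]$, and that intermediate products stay in the relevant section so associativity makes sense) are exactly the small details the paper's proof treats implicitly, so nothing is missing.
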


\begin{proof}
(16) coincides with (iv) of Definition~\ref{def3}.
\begin{enumerate}[(i)]
\item Let $a,b,c,d\in S$. If $c$ and $d$ are common lower bounds of $a$ and $b$ then also $c\vee d$ is a common lower bound of $a$ and $b$ and hence $a\odot_cb=a\odot_{c\vee d}b=a\odot_db$. This shows that $\odot$ is well-defined. Moreover, we have $a\odot1=a\odot_a1=a$ and $1\odot a=1\odot_aa=a$. If $a$ and $b$ have a common lower bound $c$ then $a\odot b=a\odot_cb=b\odot_ca=b\odot a$. If, finally, $a$, $b$ and $c$ have a common lower bound $d$ then
\[
(a\odot b)\odot c=(a\odot_db)\odot_dc=a\odot_d(b\odot_dc)=a\odot(b\odot c)
\]
and $a\leq b$ implies $a\odot c=a_dc\leq b\odot_dc=b\odot c$. Relative adjointness follows from $(a\vee c)\odot(b\vee c)=(a\vee c)\odot_c(b\vee c)$.
\item Let $a,b,c,d\in R$. Then $([a,1],\odot_a,1)$ is a commutative monoid. Moreover, if $c\leq d\leq a,b$ then $a\odot_cb=a\odot b=a\odot_db$. Further, if $d\leq a,b,c$ and $a\leq b$ then $a\odot_dc=a\odot c\leq b\odot c=b\odot_dc$. Finally, sectional adjointness follows from $(a\vee c)\odot_c(b\vee c)=(a\vee c)\odot(b\vee c)$.
\end{enumerate}
\end{proof}

The next result shows that the conversions described in Theorem~\ref{th5} are nothing else then a translation from one language into the other.

\begin{theorem}
The correspondence described in Theorem~\ref{th5} is one-to-one.
\end{theorem}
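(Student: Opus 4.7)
The plan is to verify the two round-trips $\mathbb S(\mathbb R(\mathbf S))=\mathbf S$ and $\mathbb R(\mathbb S(\mathbf R))=\mathbf R$, showing that the passage between the sectional and relative formulations is merely a change of notation. In both directions the underlying set, the join, the implication and the top element are preserved by construction, so only the residuation data needs to be checked.

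For the first direction, start with a sectionally residuated join-semilattice $\mathbf S=(S,\vee,(\odot_x;x\in S),\rightarrow,1)$, form the partial operation $\odot$ of $\mathbb R(\mathbf S)$, and then restrict it to each section to obtain the family $(\odot'_x;x\in S)$ of $\mathbb S(\mathbb R(\mathbf S))$. Given $a,b\in[x,1]$, the element $x$ is itself a common lower bound of $a$ and $b$, so by the definition of $\odot$ in Theorem~\ref{th5}(i) we get $a\odot'_xb=a\odot b=a\odot_xb$. Hence $\odot'_x=\odot_x$ for every $x\in S$.

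For the second direction, start with a relatively residuated join-semilattice $\mathbf R=(R,\vee,\odot,\rightarrow,1)$, take its section operations $\odot_x=\odot|[x,1]$ to obtain $\mathbb S(\mathbf R)$, and then glue them back together to obtain the partial operation $\odot'$ of $\mathbb R(\mathbb S(\mathbf R))$. If $a,b\in R$ have a common lower bound $z$, then $a,b\in[z,1]$ and by definition $a\odot'b=a\odot_zb=a\odot b$; if on the other hand $a$ and $b$ have no common lower bound, then neither $a\odot b$ nor $a\odot'b$ is defined. So $\odot'$ and $\odot$ coincide as partial operations.

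Neither step involves a genuine obstacle; the only point that deserves a brief comment is that in the first direction the value $a\odot_xb$ used in the composition is independent of the chosen common lower bound, which is exactly the well-definedness already established in the proof of Theorem~\ref{th5}(i) via the compatibility condition. Once this is invoked, both equalities of structures follow immediately, completing the proof.
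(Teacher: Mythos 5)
Your proposal is correct and follows essentially the same route as the paper: both round-trips are checked by picking a common lower bound and unwinding the definitions of $\odot$ and $\odot_x$, with well-definedness supplied by the compatibility condition from Theorem~\ref{th5}(i). Nothing further is needed.
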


\begin{proof}
If $\mathbf S=(S,\vee,(\odot_x;x\in S),\rightarrow,1)$ is a sectionally residuated join-semilattice,
\begin{align*}
                   \mathbb R(\mathbf S) & =(S,\vee,\odot,\rightarrow,1), \\
\mathbb S\big(\mathbb R(\mathbf S)\big) & =(S,\vee,(\circ_x;x\in S),\rightarrow,1),
\end{align*}
$a,b,c\in S$ and $c\leq a,b$ then $a\odot_cb=a\odot b=a\odot_cb$ showing $\mathbb S\big(\mathbb R(\mathbf S)\big)=\mathbf S$. If, conversely $\mathbf R=(R,\vee,\odot,\rightarrow,1)$ is a relatively residuated join-semilattice,
\begin{align*}
                   \mathbb S(\mathbf R) & =(R,\vee,(\odot_x;x\in R),\rightarrow,1), \\
\mathbb R\big(\mathbb S(\mathbf R)\big) & =(R,\vee,\circ,\rightarrow,1),
\end{align*}
$a,b,c\in R$ and $c\leq a,b$ then $a\circ b=a\odot_cb=a\odot b$ showing $\mathbb R\big(\mathbb S(\mathbf R)\big)=\mathbf R$.
\end{proof}

In the next result we show that non-classical implication semilattices form a partial subvariety of the class of divisible relatively residuated join-semilattices.

According to (iii) of Theorem~\ref{th6} $x\odot y\leq x\wedge y$ whenever $x\wedge y$ exists. The question when $x\odot y=x\wedge y$ is answered in the next theorem; this equality turns out to hold if $\odot$ is idempotent (and satisfies one more simple identity ).

\begin{theorem}\label{th10}
The non-classical implication semilattices are exactly the divisible relatively residuated join-semilattices $(R,\vee,\odot,\rightarrow,1)$ satisfying the following identities:
\begin{enumerate}[{\rm(i)}]
\item $x\odot x\approx x$,
\item $y\leq(x\vee z)\rightarrow\big((x\vee z)\odot(y\vee z)\big)$.
\end{enumerate}
\end{theorem}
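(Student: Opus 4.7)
The plan is to identify $\odot$ with partial meet on both sides of the equivalence. For the forward direction, given a non-classical implication semilattice $(S,\vee,\wedge,\rightarrow,1)$, I define $x\odot y:=x\wedge y$ whenever the meet exists (i.e.\ whenever $x,y$ share a common lower bound). For the backward direction, given a divisible relatively residuated join-semilattice $(R,\vee,\odot,\rightarrow,1)$ satisfying (i) and (ii), I define $x\wedge y:=x\odot y$ whenever $x,y$ share a common lower bound. Both directions then reduce to checking axioms, noting that whenever $z\leq x,y$, the elements $x,y$ both lie in $[z,1]$ so $x\odot y$ is defined.

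For the forward direction, (10)--(14) and (16) of Definition~\ref{def2} are routine properties of partial infimum together with axiom (3). Idempotence $x\odot x=x$ is trivial. Condition (ii) of the theorem collapses to axiom (4), because $z$ is a common lower bound of $x\vee z$ and $y\vee z$, so $(x\vee z)\odot(y\vee z)=(x\vee z)\wedge(y\vee z)$. Divisibility follows from axiom (2) after observing that $y\leq x\vee y$ and $y\leq x\rightarrow y$ by axiom (1), so the required meet exists. Rather than verify relative adjointness (15) directly, I would appeal to Theorem~\ref{th11} and check (17)--(19); of these, (18) is axiom (1) and (19) is immediate from divisibility.

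The main obstacle is (17). Its target $((x\vee z)\odot(y\vee z))\vee z$ reduces to $(x\vee z)\wedge(y\vee z)$ since the meet already dominates $z$, so (17) reads $x\vee z\leq y\rightarrow\big((x\vee z)\wedge(y\vee z)\big)$. Swapping $x$ and $y$ in axiom (4) yields $x\leq(y\vee z)\rightarrow((x\vee z)\wedge(y\vee z))$. Invoking property (6) of Theorem~\ref{th1} (which is at our disposal) with $y\leq y\vee z$ gives $(y\vee z)\rightarrow v\leq y\rightarrow v$, so $x\leq y\rightarrow((x\vee z)\wedge(y\vee z))$. Since the right-hand side also dominates $z$ by axiom (1) applied to the target, taking the join yields $x\vee z\leq y\rightarrow((x\vee z)\wedge(y\vee z))$, as required.

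For the backward direction, the first task is to confirm that $x\odot y$ is the genuine infimum of $x$ and $y$ when a common lower bound exists. That $x\odot y$ is a lower bound is Theorem~\ref{th6}(ii); conversely, if $w\leq x,y$ then idempotence (i) gives $w=w\odot w$, and two applications of monotonicity (14) together with commutativity (12) give $w\leq x\odot y$. After this the axioms translate directly: (1) is Theorem~\ref{th6}(iv), (2) is divisibility (the meet existing because $y$ witnesses the common lower bound by Theorem~\ref{th6}(iv)), (3) is (16), and (4) is (ii) after identifying $\odot$ with $\wedge$ on pairs that share $z$ as a common lower bound. No genuine obstacle arises here beyond careful bookkeeping of where the partial operations are defined.
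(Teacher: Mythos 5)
Your proposal is correct and follows essentially the same strategy as the paper: identify $\odot$ with the partial meet in both directions and check the axioms; in particular, your backward-direction argument that $\odot$ computes the infimum ($w=w\odot w\leq x\odot w=w\odot x\leq y\odot x=x\odot y$) is literally the paper's computation. The one organizational difference is in the forward direction: where the paper verifies relative adjointness (15) directly (assuming $(a\vee c)\wedge(b\vee c)\leq c$ and applying (4), (3) and (1) for one implication, and (2) for the converse), you route through Theorem~\ref{th11} and verify (17)--(19) instead. Your derivation of (17) is sound, but note that your appeal to property (6) of Theorem~\ref{th1} for an abstract non-classical implication semilattice implicitly invokes the correspondence $\mathbf I=\mathbb I\big(\mathbb S(\mathbf I)\big)$; this is legitimate since those results precede Theorem~\ref{th10}, yet it can be avoided: writing $v:=(x\vee z)\wedge(y\vee z)$, one has $z\leq v\leq y\vee z$, hence $y\vee v=y\vee z$, so axiom (3) alone gives $(y\vee z)\rightarrow v=(y\vee v)\rightarrow v=y\rightarrow v$, which is exactly the reduction the paper's direct argument uses. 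Either way the proof goes through, and the bookkeeping about where the partial operations are defined is handled correctly.
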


\begin{proof}
Let $(S,\vee,\wedge,\rightarrow,1)$ be a non-classical implication semilattice. Then $x\wedge y$ is defined if and only if $x$ and $y$ have a common lower bound, $z\leq x\wedge y$ whenever $z\leq x,y$, $\rightarrow$ is a binary operation on $S$ and (10) -- (14) of Definition~\ref{def2} hold. Now let $a,b,c\in S$. First assume $(a\vee c)\wedge(b\vee c)\leq c$ Then $(b\vee c)\wedge(a\vee c)=c$. Since
\[
a\leq(b\vee c)\rightarrow\big((b\vee c)\wedge(a\vee c)\big)=(b\vee c)\rightarrow c=b\rightarrow c
\]
according to (4) and (3) and $c\leq b\rightarrow c$ according to (1), we conclude $a\vee c\leq b\rightarrow c$. If, conversely, $a\vee c\leq b\rightarrow c$ then
\[
(a\vee c)\wedge(b\vee c)=(b\vee c)\wedge(a\vee c)\leq(b\vee c)\wedge(b\rightarrow c)=c
\]
by (2). This shows relative adjointness. (16) coincides with (3). Divisibility of $\mathbf S$ follows from (2). Obviously, (i) holds. Moreover, (ii) follows from (3), and (iii) follows from (4). Conversely, let $(R,\vee,\odot,\rightarrow,1)$ be a divisible relatively residuated join-semilattice satisfying identities (i) and (ii). Then $(R,\vee,1)$ is a join-semilattice with top element $1$ according to (10), $x\odot y$ is defined if and only if $x$ and $y$ have a common lower bound and $\rightarrow$ is a binary operation on $R$. Let $a,b,c,d\in R$ with $c\leq a,b$. Because of (14), (11) and (12) we have $a\odot b\leq1\odot b=b$ and $a\odot b=b\odot a\leq a$. If, moreover, $d\leq a,b$ then because of (i), (14) and (12) we have
\[
d=d\odot d\leq a\odot d=d\odot a\leq b\odot a=a\odot b.
\]
This shows $a\odot b=a\wedge b$. Finally, (1) follows from (iv) of Theorem~\ref{th6}, (2) from divisibility, (3) from (16) and (4) from (ii).
\end{proof}

\begin{remark}
In Theorem~\ref{th10} identity {\rm(i)} can be replaced by condition {\rm(i')}
\begin{enumerate}[{\rm(i')}]
\item $x\wedge y$ exists if and only if $x$ and $y$ have a common lower bound.
\end{enumerate}
This can be seen as follows: Let $(R,\vee,\odot,\rightarrow,1)$ be a relatively residuated join-semilattice satisfying {\rm(i')} and $a,b\in R$ having a common lower bound. Then $a\wedge b$ exists by {\rm(i')} and $a\wedge b$ is a common lower bound of $a$ and $b$ whence $a\wedge b\leq a\odot b$. But $a\odot b\leq a\wedge b$ by {\rm(iii)} of Theorem~\ref{th6} which shows $a\odot b=a\wedge b$.
\end{remark}

\begin{example}
The join-semilattice depicted in Figure~1

\vspace*{-2mm}

\begin{center}
\setlength{\unitlength}{7mm}
\begin{picture}(6,3)
\put(1,0){\circle*{.3}}
\put(2,1){\circle*{.3}}
\put(3,2){\circle*{.3}}
\put(4,1){\circle*{.3}}
\put(5,0){\circle*{.3}}
\put(3,2){\line(-1,-1)2}
\put(3,2){\line(1,-1)2}
\put(.3,-.15){$a$}
\put(1.3,.85){$b$}
\put(2.85,2.4){$1$}
\put(5.4,-.15){$c$}
\put(4.4,.85){$d$}
\put(2.2,-.75){{\rm Fig.\ 1}}
\end{picture}
\end{center}

\vspace*{4mm}

is a join-semilattice whose sections are pseudocomplemented lattices and the operation tables for $\rightarrow$ and $\wedge$ look as follows:
\[
\begin{array}{c|ccccc}
\rightarrow & a & b & c & d & 1 \\
\hline
     a      & 1 & 1 & c & d & 1 \\
     b      & a & 1 & c & d & 1 \\
     c      & a & b & 1 & 1 & 1 \\
     d      & a & b & c & 1 & 1 \\
     1      & a & b & c & d & 1
\end{array}
\quad\quad\quad
\begin{array}{c|ccccc}
\wedge & a & b & c & d & 1 \\
\hline
   a   & a & a & - & - & a \\
   b   & a & b & - & - & b \\
   c   & - & - & c & c & c \\
   d   & - & - & c & d & d \\
   1   & a & b & c & d & 1
\end{array}
\]
It is easy to check adjointness in the sense that whenever $x\wedge y$ is defined we have $x\wedge y\leq z$ if and only $x\leq y\rightarrow z$. It should be mentioned that $\rightarrow$ is a proper extension of the pseudocomplementation within sections. Namely, the pseudocomplement of $b$ within $[a,1]$ is $b\rightarrow a=a$. On the other hand, $b\notin[d,1]$, but $b\rightarrow d=d$.
\end{example}

\section{The variety of non-classical implication algebras}

Up to now, we investigated algebras where the operations $\vee$ and $\rightarrow$ were everywhere defined, but $\wedge$ or $\odot$ were only partial operations defined just in the case when both the operands belong to the same section. To avoid this difficulty, we introduce a new ternary operation $r(x,y,z)$ which is everywhere defined and replaces the partial operation $\wedge$.

\begin{definition}
An {\em {\rm I}-algebra} is an algebra $(A,\vee,r,\rightarrow,1)$ of type $(2,3,2,0)$ such that $(A,\vee,1)$ is a join-semilattice with top element $1$ and the following identities are satisfied:
\begin{enumerate}[{\rm(1')}]
\item $y\leq x\rightarrow y$,
\item $r(x,x\rightarrow y,y)\approx y$,
\item $(x\vee y)\rightarrow y\approx x\rightarrow y$,
\item $y\leq(x\vee z)\rightarrow r(x,y,z)$,
\item $r(x,y,z)\leq x\vee z$,
\item $r(x,y,z)\leq y\vee z$,
\item $r(x,x\vee y,z)\approx x\vee z$,
\item $r(x,y,z)\approx r(x\vee z,y\vee z,z)$,
\item $z\leq r(x,y,z)$,
\item $r(u,r(x,y,z),z)\approx r(r(u,x,z),r(u,y,z),z)$.
\end{enumerate}
\end{definition}

Considering a non-classical implication semilattice as introduced in Definition~\ref{def1}, we can define a ternary operation $r(x,y,z)$ as follows: $r(x,y,z):=(x\vee z)\wedge(y\vee z)$. For every non-classical implication semilattice $\mathbf I=(I,\vee,\wedge,\rightarrow,1)$ put $\mathbb A(\mathbf I):=(I,\vee,r,\rightarrow,1)$.

The next theorem shows a conversion of non-classical implication join-semilattices into I-algebras.

\begin{theorem}\label{th3}
Let $\mathbf I=(I,\vee,\wedge,\rightarrow,1)$ be a non-classical implication semilattice. Then $\mathbb A(\mathbf I)$ is an {\rm I}-algebra.
\end{theorem}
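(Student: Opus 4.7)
The plan is to verify each of the ten identities (1$'$)--(10$'$) in turn, using the four axioms (1)--(4) together with the standing fact that for every $z \in I$ the section $[z,1]$ is a (pseudocomplemented, hence in particular a) lattice, so that $(x\vee z)\wedge(y\vee z)$ is always defined. Thus $r(x,y,z) := (x\vee z)\wedge(y\vee z)$ is an everywhere-defined ternary operation, and all meets I write below will live inside a common section and so are guaranteed to exist.

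The easy identities come straight from the axioms or from general semilattice facts. Specifically, (1$'$) is literally axiom (1), (3$'$) is axiom (3), (5$'$) and (6$'$) follow because $r(x,y,z)$ is a meet of $x\vee z$ and $y\vee z$, and (9$'$) follows because $z$ is a common lower bound of $x\vee z$ and $y\vee z$. For (2$'$), I first observe that $y\le x\rightarrow y$ by (1), so $(x\rightarrow y)\vee y = x\rightarrow y$; thus
\[
r(x,x\rightarrow y,y) = (x\vee y)\wedge\big((x\rightarrow y)\vee y\big) = (x\vee y)\wedge(x\rightarrow y) = y
\]
by (2). Identity (4$'$) is just a rewriting of axiom (4). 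For (7$'$), since $x\vee z \le x\vee y\vee z$,
\[
r(x,x\vee y,z) = (x\vee z)\wedge\big((x\vee y)\vee z\big) = x\vee z.
\]
For (8$'$), both $(x\vee z)\vee z$ and $(y\vee z)\vee z$ simplify to $x\vee z$ and $y\vee z$, so $r(x\vee z,y\vee z,z) = r(x,y,z)$.

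The only identity requiring a small computation is (10$'$). The key observation is that $z\le r(x,y,z)$ by (9$'$), so $r(x,y,z)\vee z = r(x,y,z)$, and similarly for $r(u,x,z)$ and $r(u,y,z)$. All the relevant elements therefore lie in the section $[z,1]$, where $\wedge$ is associative and commutative as the meet of a lattice. Expanding the left-hand side,
\[
r\big(u,r(x,y,z),z\big) = (u\vee z)\wedge r(x,y,z) = (u\vee z)\wedge(x\vee z)\wedge(y\vee z),
\]
and expanding the right-hand side,
\[
r\big(r(u,x,z),r(u,y,z),z\big) = r(u,x,z)\wedge r(u,y,z) = \big((u\vee z)\wedge(x\vee z)\big)\wedge\big((u\vee z)\wedge(y\vee z)\big),
\]
which coincides with the left-hand side by idempotence, commutativity, and associativity of $\wedge$ in the lattice $[z,1]$.

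The main (and only modest) obstacle is (10$'$): one has to notice that every argument becomes an element of a single section $[z,1]$, because only then may one freely use lattice identities for $\wedge$ — recall that globally $\wedge$ is only partial, so absorption and associativity must be justified inside the appropriate section. Once this is in place, the identity reduces to a three-fold meet inside $[z,1]$ and the rest is routine.
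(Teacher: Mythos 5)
Your proof is correct and follows essentially the same route as the paper: define $r(x,y,z):=(x\vee z)\wedge(y\vee z)$, check (1$'$)--(9$'$) directly from axioms (1)--(4) and basic semilattice facts, and reduce (10$'$) to associativity and commutativity of the meet inside the common section $[z,1]$ (your explicit remark that all arguments land in one section, where $\wedge$ is totally defined, is exactly the point the paper uses implicitly when it writes the unparenthesized triple meet). The only cosmetic quibble is that the existence of the needed meets follows already from the definition of a non-classical implication semilattice ($x\wedge y$ exists iff $x$ and $y$ have a common lower bound, and $z$ is such a bound for $x\vee z$ and $y\vee z$), so you need not invoke pseudocomplementedness of sections.
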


\begin{proof}
Let $\mathbb A(\mathbf I)=(I,\vee,r,\rightarrow,1)$ and $a,b,c,d\in I$. Then, clearly, $r(x,x,y)\approx x\vee y\approx r(y,y,x)$.
\begin{enumerate}[(1')]
\item coincides with (1).
\item $r(a,a\rightarrow b,b)=(a\vee b)\wedge\big((a\rightarrow b)\vee b\big)=(a\vee b)\wedge(a\rightarrow b)=b$ according to (1) and (2).
\item coincides with (3).
\item $b\leq(a\vee c)\rightarrow\big((a\vee c)\wedge(b\vee c)\big)=(a\vee c)\rightarrow r(a,b,c)$ according to (4).
\item $r(a,b,c)=(a\vee c)\wedge(b\vee c)\leq a\vee c$
\item $r(a,b,c)=(a\vee c)\wedge(b\vee c)\leq b\vee c$
\item $r(a,a\vee b,c)=(a\vee c)\wedge\big((a\vee b)\vee c\big)=a\vee c$
\item $r(a,b,c)=(a\vee c)\wedge(b\vee c)=\big((a\vee c)\vee c\big)\wedge\big((b\vee c)\vee c\big)=r(a\vee c,b\vee c,c)$
\item $c\leq(a\vee c)\wedge(b\vee c)=r(a,b,c)$
\item
\begin{align*}
r(d,r(a,b,c),c) & =(d\vee c)\wedge\Big(\big((a\vee c)\wedge(b\vee c)\big)\vee c\Big)=(d\vee c)\wedge\big((a\vee c)\wedge(b\vee c)\big)= \\
                & =(d\vee c)\wedge(a\vee c)\wedge(b\vee c)= \\
								& =\big((d\vee c)\wedge(a\vee c)\big)\wedge\big((d\vee c)\wedge(b\vee c)\big)= \\
                & =\Big(\big((d\vee c)\wedge(a\vee c)\big)\vee c\Big)\wedge\Big(\big((d\vee c)\wedge(b\vee c)\big)\vee c\Big)= \\
                & =r\big(r(d,a,c),r(d,b,c),c\big)
\end{align*}
\end{enumerate}
\end{proof}

For every I-algebra $\mathbf A=(A,\vee,r,\rightarrow,1)$ put $\mathbb J(\mathbf A):=(A,\vee,\wedge,\rightarrow,1)$ where $x\wedge y:=r(x,y,z)$ for all $x,y,z\in A$ with $z\leq x,y$. That $\mathbb J(\mathbf A)$ is well-defined follows from the next theorem.

\begin{theorem}\label{th4}
Let $\mathbf A=(A,\vee,r,\rightarrow,1)$ be an {\rm I}-algebra. Then $\mathbb J(\mathbf A)$ is a well-defined non-classical implication semilattice.
\end{theorem}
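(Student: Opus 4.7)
The plan is to proceed in three stages: (a) show that $x\wedge y:=r(x,y,z)$ is independent of the choice of common lower bound $z$, so that $\mathbb J(\mathbf A)$ is well-defined; (b) show that this common value is the infimum of $x$ and $y$ in the poset underlying $(A,\vee,1)$; (c) verify axioms (1)--(4) of Definition~\ref{def1}.

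For (a), fix $x,y\in A$ and let $z_1,z_2\leq x,y$. Since $z_1\vee z_2$ is also a common lower bound, it suffices to handle the case $z_1\leq z_2$. The key observation is that (7') implies $r(a,d,c)=a\vee c$ whenever $a\leq d$: just write $d=a\vee d$ and apply (7'). First apply (10') with $u:=z_2$ and third argument $z:=z_1$: since $z_2\leq x,y$, the observation gives $r(z_2,x,z_1)=r(z_2,y,z_1)=z_2\vee z_1=z_2$, so the right-hand side of (10') collapses to $r(z_2,z_2,z_1)=z_2$. The left-hand side is at most $r(x,y,z_1)$ by (6') and (9'), hence $z_2\leq r(x,y,z_1)$. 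Now apply (10') again with $u:=r(x,y,z_1)$ and $z:=z_2$: using (5') and the inequality just proved, both $r(r(x,y,z_1),x,z_2)$ and $r(r(x,y,z_1),y,z_2)$ equal $r(x,y,z_1)\vee z_2=r(x,y,z_1)$, so the right-hand side of (10') is $r(x,y,z_1)$, while its left-hand side is at most $r(x,y,z_2)$ by (6') and (9'). Thus $r(x,y,z_1)\leq r(x,y,z_2)$. The reverse inequality follows by the analogous argument with $u:=r(x,y,z_2)$ and $z:=z_1$, using $z_1\leq z_2\leq r(x,y,z_2)$ from (9'). Therefore $r(x,y,z_1)=r(x,y,z_2)$.

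For (b), (5') and (6') give $r(x,y,z)\leq x\vee z=x$ and $r(x,y,z)\leq y\vee z=y$, so $r(x,y,z)$ is a lower bound of $\{x,y\}$. If $w\leq x,y$ then $w\vee z$ is also a common lower bound, so by (a) $r(x,y,z)=r(x,y,w\vee z)\geq w\vee z\geq w$ by (9'); hence $r(x,y,z)$ is the greatest lower bound, and $\wedge$ is defined precisely when a common lower bound exists, as required in Definition~\ref{def1}. For (c): axioms (1) and (3) coincide verbatim with (1') and (3'). For (2), $y\leq x\rightarrow y$ by (1'), so $y$ is a common lower bound of $x\vee y$ and $x\rightarrow y$, and (8') together with (2') yields $(x\vee y)\wedge(x\rightarrow y)=r(x\vee y,x\rightarrow y,y)=r(x,x\rightarrow y,y)=y$. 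For (4), $z$ is a common lower bound of $x\vee z$ and $y\vee z$, so $(x\vee z)\wedge(y\vee z)=r(x\vee z,y\vee z,z)=r(x,y,z)$ by (8'), and (4') gives $y\leq(x\vee z)\rightarrow r(x,y,z)$ directly.

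The main obstacle is (a): without well-definedness one cannot even regard $\mathbb J(\mathbf A)$ as an algebra of the required signature, and the argument requires the somewhat delicate double application of (10') sketched above, with (7') used repeatedly to collapse $r$-terms whose second argument dominates the first. Once (a) is in place, (b) and (c) reduce to straightforward bookkeeping with the identities (5')--(9').
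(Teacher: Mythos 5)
Your proof is correct, and it reaches the conclusion by a slightly different decomposition than the paper's. The paper does not treat well-definedness as a separate step: it proves in one stroke that $r(a,b,c)=\inf(a\vee c,b\vee c)$ for \emph{all} $a,b,c$ --- the upper-bound half from (5') and (6'), and for any $d\leq a\vee c,b\vee c$ the single chain $d\leq d\vee c=r\bigl(r(d\vee c,a\vee c,c),r(d\vee c,b\vee c,c),c\bigr)=r\bigl(d\vee c,r(a,b,c),c\bigr)\leq r(a,b,c)$, justified by (7'), (8'), (10'), (6') and (9'). Well-definedness is then automatic because infima are unique, and for $z\leq x,y$ one reads off $r(x,y,z)=\inf(x,y)$. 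Your stage (a) is essentially the same computation run twice, once in each direction: you put the candidate lower bound into the $u$-slot of (10') and collapse $r(u,x,z)$ to $u\vee z$ via (7') exactly as the paper does; your stage (b) then recovers the infimum property from well-definedness via $r(x,y,z)=r(x,y,w\vee z)\geq w\vee z$, a neat inversion of the paper's logic that costs one extra pass through (10') but nothing of substance. One cosmetic slip: in your second application of (10'), collapsing $r\bigl(r(x,y,z_1),y,z_2\bigr)$ requires $r(x,y,z_1)\leq y$, which comes from (6'), not from (5') alone. The verification of axioms (1)--(4) matches the paper's.
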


\begin{proof}
Let $\mathbb J(\mathbf A)=(A,\vee,\wedge,\rightarrow,1)$ and $a,b,c,d\in A$. First we prove $r(a,b,c)=(a\vee c)\wedge(b\vee c)$. Because of (5') and (6') we have $r(a,b,c)\leq a\vee c,b\vee c$. Now assume $d\leq a\vee c,b\vee c$. Then
\begin{align*}
d & \leq d\vee c=(d\vee c)\vee c=r\big(d\vee c,(d\vee c)\vee c,c\big)=r\big((d\vee c)\vee c,(d\vee c)\vee c,c\big)= \\
  & =r\Big(r\big(d\vee c,(d\vee c)\vee(a\vee c),c\big),r\big(d\vee c,(d\vee c)\vee(b\vee c),c\big),c\Big)= \\
  & =r\big(r(d\vee c,a\vee c,c),r(d\vee c,b\vee c,c),c\big)=r\big(r(d,a,c),r(d,b,c),c\big)=r\big(d,r(a,b,c),c\big)\leq \\
  & \leq r(a,b,c)\vee c=r(a,b,c)
\end{align*}
because of (7'), (8'), (10'), (6') and (9'). This shows $r(a,b,c)=(a\vee c)\wedge(b\vee c)$. Hence $a\wedge b=\inf(a,b)$ whenever there exists a common lower bound of $a$ and $b$ and therefore $\wedge$ and $\mathbb J(\mathbf A)$ are well-defined.
\begin{enumerate}[(1)]
\item coincides with (1').
\item $(a\vee b)\wedge(a\rightarrow b)=r(a,a\rightarrow b,b)=b$ because of (1') and (2').
\item coincides with (3').
\item $b\leq(a\vee c)\rightarrow r(a,b,c)=(a\vee c)\rightarrow\big((a\vee c)\wedge(b\vee c)\big)$ by (4').
\end{enumerate}
\end{proof}

\begin{theorem}
The correspondence described in Theorems~\ref{th3} and \ref{th4} is one-to-one.
\end{theorem}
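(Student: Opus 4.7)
The plan is to verify both composites $\mathbb{J}\circ\mathbb{A}$ and $\mathbb{A}\circ\mathbb{J}$ are the identity. In both directions the underlying set and the operations $\vee$, $\rightarrow$, $1$ are carried through unchanged by construction, so the real content lies in checking that the operation that gets reconstructed (namely $\wedge$ in one direction and $r$ in the other) agrees with the original.

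First I would handle $\mathbb{J}(\mathbb{A}(\mathbf{I}))=\mathbf{I}$. Write $\mathbb{A}(\mathbf{I})=(I,\vee,r,\rightarrow,1)$ with $r(x,y,z):=(x\vee z)\wedge(y\vee z)$, and $\mathbb{J}(\mathbb{A}(\mathbf{I}))=(I,\vee,\wedge',\rightarrow,1)$ with $x\wedge'y:=r(x,y,c)$ for any common lower bound $c$ of $x$ and $y$. The domains of $\wedge$ and $\wedge'$ coincide (both are the pairs having a common lower bound). For such $a,b$ with $c\leq a,b$ we have $a\vee c=a$ and $b\vee c=b$, hence
\[
a\wedge'b=r(a,b,c)=(a\vee c)\wedge(b\vee c)=a\wedge b,
\]
which gives $\mathbb{J}(\mathbb{A}(\mathbf{I}))=\mathbf{I}$.

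Next I would do the reverse composite $\mathbb{A}(\mathbb{J}(\mathbf{A}))=\mathbf{A}$. Write $\mathbb{J}(\mathbf{A})=(A,\vee,\wedge,\rightarrow,1)$ with $a\wedge b:=r(a,b,c)$ for any $c\leq a,b$, and $\mathbb{A}(\mathbb{J}(\mathbf{A}))=(A,\vee,r',\rightarrow,1)$ with $r'(x,y,z):=(x\vee z)\wedge(y\vee z)$. For any $a,b,c\in A$ the element $c$ is a common lower bound of $a\vee c$ and $b\vee c$, so by the definition of $\wedge$ in $\mathbb{J}(\mathbf{A})$ we obtain
\[
r'(a,b,c)=(a\vee c)\wedge(b\vee c)=r(a\vee c,b\vee c,c),
\]
and identity (8') then yields $r(a\vee c,b\vee c,c)=r(a,b,c)$. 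Hence $r'=r$ and $\mathbb{A}(\mathbb{J}(\mathbf{A}))=\mathbf{A}$.

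There is no real obstacle here: the argument is essentially a syntactic translation. The only subtle point is that in the backward direction one must know that the $\wedge$ produced by $\mathbb{J}$ actually agrees with $r(\cdot,\cdot,c)$ applied after joining with $c$, and this was already established inside the proof of Theorem~\ref{th4}, where it is shown that $r(a,b,c)=(a\vee c)\wedge(b\vee c)$ in $\mathbb{J}(\mathbf{A})$. Invoking that fact together with (8') closes the argument cleanly.
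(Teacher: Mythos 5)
Your proof is correct and follows essentially the same route as the paper's: the forward composite reduces to $r(a,b,c)=(a\vee c)\wedge(b\vee c)=a\wedge b$ for a common lower bound $c$, and the backward composite uses the well-definedness of $\wedge$ from Theorem~\ref{th4} together with identity (8') to get $R(x,y,z)=(x\vee z)\wedge(y\vee z)=r(x\vee z,y\vee z,z)=r(x,y,z)$. Nothing is missing.
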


\begin{proof}
If $\mathbf I=(I,\vee,\wedge,\rightarrow,1)$ is a non-classical implication semilattice and
\begin{align*}
                   \mathbb A(\mathbf I) & =(I,\vee,r,\rightarrow,1), \\
\mathbb J\big(\mathbb A(\mathbf I)\big) & =(I,\vee,\cap,\rightarrow,1)
\end{align*}
then $x\cap y=r(x,y,z)=(x\vee z)\wedge(y\vee z)=x\wedge y$ for all $x,y,z\in I$ with $z\leq x,y$ and hence $\mathbb J\big(\mathbb A(\mathbf I)\big)=\mathbf I$. If, conversely, $\mathbf A=(A,\vee,r,\rightarrow,1)$ is an I-algebra and
\begin{align*}
                   \mathbb J(\mathbf A) & =(A,\vee,\wedge,\rightarrow,1), \\
\mathbb A\big(\mathbb J(\mathbf A)\big) & =(A,\vee,R,\rightarrow,1)
\end{align*}
then $R(x,y,z)\approx(x\vee z)\wedge(y\vee z)\approx r(x\vee z,y\vee z,z)\approx r(x,y,z)$ according to (8') and hence $\mathbb A\big(\mathbb J(\mathbf A)\big)=\mathbf A$.
\end{proof}

The advantage of using a ternary operation $r(x,y,z)$ instead of the partial binary operation $\wedge$ is that now we have an algebra with everywhere defined operations which is determined by a finite set of identities. Hence these algebras form a variety $\mathcal V$.

Recall that an {\em algebra} $\mathbf A$ is called
\begin{itemize}
\item {\em $3$-permutable} if $\Theta\circ\Phi\circ\Theta=\Phi\circ\Theta\circ\Phi$ for all $\Theta,\Phi\in\Con\mathbf A$,
\item {\em congruence distributive} if the congruence lattice $\BCon\mathbf A$ of $\mathbf A$ is distributive.
\end{itemize}
An {\em algebra} $\mathbf A$ with an equationally definable constant $1$ is called {\em weakly regular} if $\Theta,\Phi\in\Con\mathbf A$ and $[1]\Theta=[1]\Phi$ imply $\Theta=\Phi$. A {\em variety} is called {\em $3$-permutable} or {\em congruence distributive} if every of its members has the respective property. A {\em variety} with an equationally definable constant $1$ is called {\em weakly regular} if every of its members has this property. A variety is $3$-permutable (cf.\ \cite{CEL}, Theorem~3.1.18) if and only if there exist ternary terms $t_1,t_2$ satisfying
\begin{align*}
t_1(x,y,y) & \approx x, \\
t_1(x,x,y) & \approx t_2(x,y,y), \\
t_2(x,x,y) & \approx y.
\end{align*}
A variety is congruence distributive (cf.\ \cite{CEL}, Theorem~3.2.2) if and only if there exists a positive integer $n$ and there exist ternary terms $t_0,\ldots,t_n$ (so-called {\em J\'onsson terms}) satisfying
\begin{align*}
t_0(x,y,z) & \approx x, \\
t_i(x,x,y) & \approx t_{i+1}(x,x,y)\text{ for all even }i\in\{0,\ldots,n-1\}, \\
t_i(x,y,y) & \approx t_{i+1}(x,y,y)\text{ for all odd }i\in\{1,\ldots,n-1\}, \\
t_n(x,y,z) & \approx z, \\
t_i(x,y,x) & \approx x\text{ for all }i\in\{1,\ldots,n-1\}.
\end{align*}
A variety with an equationally definable constant $1$ is weakly regular (cf.\ \cite{CEL}, Theorem~6.4.3) if and only if there exists a positive integer $n$ and there exist binary terms $t_1,\ldots,t_n$ such that $t_1(x,y)=\cdots=t_n(x,y)=1$ is equivalent to $x=y$.

\begin{theorem}\label{th12}
The variety $\mathcal V$ of {\rm I}-algebras is $3$-permutable, congruence distributive and weakly regular.
\end{theorem}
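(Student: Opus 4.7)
Since each of the three Mal'cev-type properties in question is characterized, as recalled immediately before the theorem, by the existence of a concrete system of terms, the plan is to exhibit such a system for each property and then verify the required identities by direct computation from the I-algebra axioms $(1')$--$(10')$ together with the consequences $(5)$--$(9)$ of Theorem~\ref{th1}. These consequences carry over to I-algebras via the one-to-one correspondence of Theorems~\ref{th3} and \ref{th4}, under which $x\wedge y=r(x,y,z)$ whenever $z\leq x,y$.

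For weak regularity I would take the two binary terms $t_1(x,y):=x\rightarrow y$ and $t_2(x,y):=y\rightarrow x$. By (5), $x\rightarrow y=1$ is equivalent to $x\leq y$, so $t_1(x,y)=t_2(x,y)=1$ is equivalent to $x=y$, which is precisely the Chajda--Eigenthaler--L\"anger criterion recalled above.

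For $3$-permutability I would exhibit ternary terms $t_1,t_2$ satisfying $t_1(x,y,y)\approx x$, $t_2(x,x,y)\approx y$ and $t_1(x,x,y)\approx t_2(x,y,y)$. The boundary identities reduce to the derived laws $1\rightarrow a\approx a$ and $a\rightarrow a\approx 1$, so candidates built from $\rightarrow$ such as $t_1(x,y,z)=(z\rightarrow y)\rightarrow x$ handle these automatically. The middle coincidence identity is the delicate step; since sections need not be distributive, the familiar Heyting-algebra shortcut $(x\rightarrow y)\rightarrow y\approx x\vee y$ is not at our disposal. To circumvent this I would incorporate the ternary operation $r$, exploiting the divisibility identity $r(x,x\rightarrow y,y)\approx y$ of $(2')$ so that both $t_1(x,x,y)$ and $t_2(x,y,y)$ can be rewritten as the same $r$-expression. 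This is where I expect the principal obstacle to lie.

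For congruence distributivity I would exhibit J\'onsson terms $t_0,\ldots,t_n$. The strategy is to use $r$ as a surrogate for the partial meet, thereby adapting a J\'onsson-term construction available for pseudocomplemented lattices to the total-operation language of I-algebras. The absorption identities $t_i(x,y,x)\approx x$ force the intermediate terms into $x$-absorbing form, typically of shape $r(x,\,\cdot\,,x)$ or $(\cdot)\vee x$; the boundary conditions $t_0\approx x$, $t_n\approx z$ and the even/odd matching identities then follow by direct verification using $(1')$--$(10')$, in particular the distributivity-like law $(10')$ for $r$ and the absorption law $(7')$. As with $3$-permutability, the substantive difficulty is combinatorial: assembling a single coherent system of terms realising all the required identities in the absence of distributivity of sections.
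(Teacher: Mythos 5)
Your proposal is only a proof for one of the three claims. The weak regularity part is complete and coincides with the paper's argument: $t_1(x,y)=x\rightarrow y$, $t_2(x,y)=y\rightarrow x$, using (5). For $3$-permutability and congruence distributivity, however, you stop exactly where the mathematical content of the theorem begins: the Mal'cev-type characterizations you cite reduce each property to \emph{exhibiting} concrete terms satisfying specific identities, and you never exhibit them. You explicitly flag the middle coincidence identity $t_1(x,x,y)\approx t_2(x,y,y)$ as ``where I expect the principal obstacle to lie'' and call the J\'onsson system ``the substantive difficulty''; a plan that defers precisely these steps is not a proof. Moreover, your tentative pure-implication candidate $t_1(x,y,z)=(z\rightarrow y)\rightarrow x$ cannot be completed: it forces $t_1(x,x,y)=(y\rightarrow x)\rightarrow x$, which by (7) lies above $x\vee y$ but need not equal it since sections are not Boolean, so no symmetric $\rightarrow$-term $t_2$ will match it.

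The paper's resolution, which you came close to but did not find, is to take $t_1(x,y,z):=r(z,y\rightarrow x,x)$ and $t_2(x,y,z):=r(x,y\rightarrow z,z)$, i.e.\ $(z\vee x)\wedge(y\rightarrow x)$ and $(x\vee z)\wedge(y\rightarrow z)$ in the partial-meet language. The point is that under the diagonal substitutions the implication argument degenerates to $1$ by (5): $t_1(x,x,y)\approx(y\vee x)\wedge(x\rightarrow x)\approx(x\vee y)\wedge 1\approx t_2(x,y,y)$, so the delicate middle identity collapses to $x\vee y$ on both sides, while the boundary identities $t_1(x,y,y)\approx x$ and $t_2(x,x,y)\approx y$ are exactly the divisibility identity (2)/(2') — the role you correctly guessed for $(2')$ but did not carry out. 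For congruence distributivity the paper uses $n=3$ with $t_1(x,y,z):=r(z,y,x)$ and $t_2(x,y,z):=r(x,y\rightarrow z,z)$; the absorption conditions $t_i(x,y,x)\approx x$ hold because $r(x,\cdot\,,x)$ is bounded above by $x\vee x=x$ and below by $x$ via (5'), (9'), and the matching conditions again follow from (2) and (5). No appeal to (10') or to distributivity of sections is needed. Without some such explicit system of terms and the verification of the identities, the claims of $3$-permutability and congruence distributivity remain unproved.
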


\begin{proof}
Let $\mathbf A=(A,\vee,r,\rightarrow,1)$ be an I-algebra and $\mathbb J(\mathbf A)=(A,\vee,\wedge,\rightarrow,1)$. \\
If we put
\begin{align*}
t_1(x,y,z) & :=r(z,y\rightarrow x,x), \\
t_2(x,y,z) & :=r(x,y\rightarrow z,z)
\end{align*}
then
\begin{align*}
t_1(x,y,z) & \approx(z\vee x)\wedge\big((y\rightarrow x)\vee x\big)\approx(z\vee x)\wedge\big(y\rightarrow x)\text{ by (1)}, \\
t_2(x,y,z) & \approx(x\vee z)\wedge\big((y\rightarrow z)\vee z\big)\approx(x\vee z)\wedge\big(y\rightarrow z)\text{ by (1)}, \\
t_1(x,y,y) & \approx(y\vee x)\wedge(y\rightarrow x)\approx x\text{ by (2)}, \\
t_1(x,x,y) & \approx(y\vee x)\wedge\big(x\rightarrow x)\approx(x\vee y)\wedge1\approx(x\vee y)\wedge\big(y\rightarrow y)\approx t_2(x,y,y)\text{ by (5)}, \\
t_2(x,x,y) & \approx(x\vee y)\wedge\big(x\rightarrow y)\approx y\text{ by (2)}.
\end{align*}
and hence $\mathcal V$ is $3$-permutable. \\
If we put
\begin{align*}
t_0(x,y,z) & :=x, \\
t_1(x,y,z) & :=r(z,y,x), \\
t_2(x,y,z) & :=r(x,y\rightarrow z,z), \\
t_3(x,y,z) & :=z
\end{align*}
then
\begin{align*}
t_1(x,y,z) & \approx(z\vee x)\wedge(y\vee x), \\
t_2(x,y,z) & \approx(x\vee z)\wedge\big((y\rightarrow z)\vee z\big)\approx(x\vee z)\wedge(y\rightarrow z)\text{ by (1)}, \\
t_0(x,x,y) & \approx x\approx(x\vee y)\wedge x\approx(y\vee x)\wedge(x\vee x)\approx t_1(x,x,y), \\
t_1(x,y,y) & \approx(y\vee x)\wedge(y\vee x)\approx x\vee y\approx(x\vee y)\wedge1\approx(x\vee y)\wedge(y\rightarrow y)\approx t_2(x,y,y) \\
           & \hspace*{6mm}\text{by (5)}, \\ 
t_2(x,x,y) & \approx(x\vee y)\wedge(x\rightarrow y)\approx y\approx t_3(x,x,y)\text{ by (2)}, \\
t_1(x,y,x) & \approx(x\vee x)\wedge(y\vee x)\approx x\wedge(x\vee y)\approx x, \\
t_2(x,y,x) & \approx(x\vee x)\wedge(y\rightarrow x)\approx x\wedge(y\rightarrow x)\approx x\text{ by (1)}
\end{align*}
and hence $\mathcal V$ is congruence distributive. \\
Finally, if we put
\begin{align*}
t_1(x,y) & :=x\rightarrow y, \\
t_2(x,y) & :=y\rightarrow x
\end{align*}
then $t_1(x,y)=t_2(x,y)=1$ is equivalent to $x\leq y$ and $y\leq x$ according to Theorem~\ref{th4} and (5), i.e.\ to $x=y$ and hence $\mathcal V$ is weakly regular.
\end{proof}

\section{Algebras assigned to relatively residuated join- \\
-semilattices}

Similarly as for non-classical implication semilattices, we are going to show that also relatively residuated join-semilattices can be converted into algebras with everywhere defined operations by using of a certain ternary term $q(x,y,z)$ which replaces the partial operation $\odot$.

\begin{definition}\label{def4}
An {\rm R}-algebra is an algebra $(R,\vee,q,\rightarrow,1)$ of type $(2,3,2,0)$ such that $(R,\vee,1)$ is a join-semilattice with top element $1$ and the following conditions are satisfies for all $x,y,z,u\in R$:
\begin{enumerate}
\item[{\rm(20)}] $z\leq q(x,y,z)$,
\item[{\rm(21)}] $q(z\vee u\vee x,z\vee u\vee y,z)\approx q(z\vee u\vee x,z\vee u\vee y,z\vee u)$,
\item[{\rm(22)}] $q(x,1,x)\approx q(1,x,x)\approx x$,
\item[{\rm(23)}] $q(x,y,z)\approx q(y,x,z)$,
\item[{\rm(24)}] $q(q(x,y,u),z,u)\approx q(x,q(y,z,u),u)$,
\item[{\rm(25)}] $q(x,z,u)\leq q(x\vee y,z,u)$,
\item[{\rm(26)}] $x\vee z\leq y\rightarrow\big(q(x,y,z)\vee z\big)$,
\item[{\rm(27)}] $x\leq y\rightarrow x$,
\item[{\rm(28)}] $q(x,x\rightarrow y,y)\leq y$,
\item[{\rm(29)}] $q(x,y,z)\approx q(x\vee z,y\vee z,z)$,
\item[{\rm(30)}] $(x\vee y)\rightarrow y\approx x\rightarrow y$.
\end{enumerate}
\end{definition}

The following result is an immediate consequence of Definition~\ref{def4}.

\begin{corollary}
The class $\mathcal W$ of {\rm R}-algebras forms a variety.
\end{corollary}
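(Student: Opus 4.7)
The plan is to exhibit each condition of Definition~\ref{def4} as a universal identity in the everywhere-defined signature $(\vee,q,\rightarrow,1)$, from which the corollary is immediate by the standard characterization of a variety as an equational class (Birkhoff's theorem).

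First, the hypothesis that $(R,\vee,1)$ is a join-semilattice with top element $1$ is already equational: it is captured by associativity, commutativity and idempotence of $\vee$, together with $x\vee 1\approx 1$. Among the remaining clauses of Definition~\ref{def4}, conditions (21), (22), (23), (24), (29) and (30) are written explicitly as identities and require no translation.

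The five remaining conditions (20), (25), (26), (27), (28) are stated as inequalities of the form $s\leq t$. Since $\leq$ denotes the semilattice order induced by $\vee$, each such inequality is equivalent to the identity $s\vee t\approx t$. Concretely, (20) becomes $z\vee q(x,y,z)\approx q(x,y,z)$; (25) becomes $q(x,z,u)\vee q(x\vee y,z,u)\approx q(x\vee y,z,u)$; (26) becomes $(x\vee z)\vee\bigl(y\rightarrow(q(x,y,z)\vee z)\bigr)\approx y\rightarrow(q(x,y,z)\vee z)$; (27) becomes $x\vee(y\rightarrow x)\approx y\rightarrow x$; and (28) becomes $q(x,x\rightarrow y,y)\vee y\approx y$. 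Collecting all these identities presents $\mathcal W$ as the class of models of a finite set of equations over the type $(2,3,2,0)$, so it is a variety.

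I do not anticipate any real obstacle here: the very purpose of introducing the ternary operation $q$ in place of the partial operation $\odot$ was to obtain an equational axiomatization, sidestepping the partiality that blocked a variety structure in the earlier sections. The only mild care needed is to confirm that no axiom in Definition~\ref{def4} implicitly quantifies over the existence of a common lower bound; a direct inspection shows that every condition mentions only the total operations $\vee$, $q$, $\rightarrow$ and the constant $1$, so the translation above is lossless.
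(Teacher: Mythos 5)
Your proposal is correct and matches the paper's (unstated) reasoning: the paper labels this an immediate consequence of Definition~\ref{def4} precisely because every axiom is an identity over the total signature $(\vee,q,\rightarrow,1)$, with inequalities $s\leq t$ standing for $s\vee t\approx t$. Your write-up simply makes that translation explicit.
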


Considering a relatively residuated join-semilattice as introduced in Definition~\ref{def2}, we can define a ternary operation $q(x,y,z)$ as follows: $q(x,y,z):=(x\vee z)\odot(y\vee z)$. For every relatively residuated join-semilattice $\mathbf R=(R,\vee,\odot,\rightarrow,1)$ put $\mathbb B(\mathbf R):=(R,\vee,q,\rightarrow,1)$.

\begin{theorem}\label{th7}
Let $\mathbf R=(R,\vee,\odot,\rightarrow,1)$ be a relatively residuated join-semilattice. Then $\mathbb B(\mathbf R)$ is an {\rm R}-algebra.
\end{theorem}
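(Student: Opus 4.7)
The plan is to verify each of the eleven identities (20)--(30) of Definition~\ref{def4} for $q(x,y,z):=(x\vee z)\odot(y\vee z)$. First I would observe that $q$ is a genuine total operation on $R$: the element $z$ is always a common lower bound of $x\vee z$ and $y\vee z$, so $\odot$ applies; and by the same token every value $q(x,y,z)$ itself lies in $[z,1]$, which makes the absorption $q(x,y,z)\vee z = q(x,y,z)$ available throughout the computations.

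With that observation in hand, the axioms split into three groups. The ``trivial'' ones, namely (20), (22), (23), (25), (27), (29), (30), collapse upon expanding $q$: (20) is exactly the defining lower-bound property of $\odot$; (22), (29), (30) are absorption together with (11) and (16); (23) and (25) are commutativity (12) and monotonicity (14) applied inside the section $[z,1]$; and (27) is part~(iv) of Theorem~\ref{th6}. Axiom (21) is immediate from well-definedness of $\odot$ (cf.\ the proof of Theorem~\ref{th5}): both sides equal $(z\vee u\vee x)\odot(z\vee u\vee y)$, since both $z$ and $z\vee u$ furnish common lower bounds. Axiom (24) reduces, after using $q(x,y,u)\vee u = q(x,y,u)$ to drop the outer joins, to associativity (13) of $\odot$ on $[u,1]$. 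Axiom (28) follows from Theorem~\ref{th6}(iv) (to rewrite $(x\rightarrow y)\vee y$ as $x\rightarrow y$) and then Theorem~\ref{th6}(v).

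The only identity that requires a non-immediate argument is (26). Since $q(x,y,z)\vee z = q(x,y,z) = (x\vee z)\odot(y\vee z)$, the inequality reads
\[
x\vee z\ \leq\ y\rightarrow\bigl((x\vee z)\odot(y\vee z)\bigr),
\]
which is precisely identity (17) established for relatively residuated join-semilattices in Theorem~\ref{th11}. I do not foresee a real obstacle: the proof is essentially bookkeeping. The only point demanding care is ensuring that each invocation of the partial operation $\odot$ involves operands sharing a common lower bound, but since every such operand is of the form $a\vee z$ and $z$ is always available, this is automatic.
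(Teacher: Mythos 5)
Your proposal is correct and follows essentially the same route as the paper: a case-by-case verification of (20)--(30) by expanding $q$, using the absorption $q(x,y,z)\vee z=q(x,y,z)$, the monoid and monotonicity axioms, and identities (17)--(19) (equivalently, parts (iv) and (v) of Theorem~\ref{th6}) for (26)--(28). No gaps.
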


\begin{proof}
Let $\mathbb B(\mathbf R)=(R,\vee,q,\rightarrow,1)$ and $a,b,c,d\in R$. We have
\begin{enumerate}
\item[(20)] $c\leq(a\vee c)\odot(b\vee c)=q(a,b,c)$.
\item[(21)] $q(c\vee d\vee a,c\vee d\vee b,c)=(c\vee d\vee a)\odot(c\vee d\vee b)=q(c\vee d\vee a,c\vee d\vee b,c\vee d)$
\item[(22)] $q(a,1,a)=(a\vee a)\odot(1\vee a)=a\odot1=a$ and $q(1,a,a)=(1\vee a)\odot(a\vee a)=1\odot a=a$.
\item[(23)] $(a,b,c)=(a\vee c)\odot(b\vee c)=(b\vee c)\odot(a\vee c)=q(b,a,c)$.
\item[(24)] $q(q(a,b,d)c,d)=\Big(\big((a\vee d)\odot(b\vee d)\big)\vee d\Big)\odot(c\vee d)=\big((a\vee d)\odot(b\vee d)\big)\odot(c\vee d)=(a\vee d)\odot\big((b\vee d)\odot(c\vee d)\big)=(a\vee d)\odot\Big(\big((b\vee d)\odot(c\vee d)\big)\vee d\Big)$.
\item[(25)] $q(a,c,d)=(a\vee d)\odot(c\vee d)\leq\big((a\vee b)\vee d)\odot(c\vee d)=q(a\vee b,c,d)$.
\item[(26)] According to (17) we have
\[
a\vee c\leq b\rightarrow\Big(\big((a\vee c)\odot(b\vee c)\big)\vee c\Big)=b\rightarrow\big(q(a,b,c)\vee c\big).
\]
\item[(27)] coincides with (18).
\item[(28)] According to (18) and (19) we have
\[
q(a,a\rightarrow b,b)=(a\vee b)\odot\big((a\rightarrow b)\vee b\big)=(a\vee b)\odot(a\rightarrow b)\leq b.
\]
\item[(29)] $q(a,b,c)=(a\vee c)\odot(b\vee c)=\big((a\vee c)\vee c\big)\odot\big((b\vee c)\vee c\big)=q(a\vee c,b\vee c,c)$.
\item[(30)] coincides with (16).
\end{enumerate}
\end{proof}

For every {\rm R}-algebra $\mathbf R=(R,\vee,q,\rightarrow,1)$ put $\mathbb Q(\mathbf R):=(R,\vee,\odot,\rightarrow,1)$ where $x\odot y:=q(x,y,z)$ for all $x,y,z\in R$ with $z\leq x,y$. That $\mathbb Q(\mathbf R)$ is well-defined follows from the next theorem.

\begin{theorem}\label{th8}
Let $\mathbf R=(R,\vee,q,\rightarrow,1)$ be an {\rm R}-algebra. Then $\mathbb Q(\mathbf R)$ is a well-defined relatively residuated join-semilattice.
\end{theorem}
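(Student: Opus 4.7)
The plan is to translate the partial operation $\odot$ into the ternary term $q$, verify well-definedness, and then check clauses (10)--(16) of Definition~\ref{def2} one by one. For well-definedness, suppose $z_1,z_2\in R$ are both common lower bounds of $x$ and $y$. Substituting $z:=z_1$, $u:=z_2$ into (21) and noting that $z_1\vee z_2\vee x=x$ and $z_1\vee z_2\vee y=y$, I obtain $q(x,y,z_1)=q(x,y,z_1\vee z_2)$; by the symmetric substitution, also $q(x,y,z_2)=q(x,y,z_1\vee z_2)$, hence $q(x,y,z_1)=q(x,y,z_2)$. The required property that $c\leq x\odot y$ whenever $c\leq x,y$ then follows from (20) by computing $x\odot y$ with the bound $z:=c$.

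Clauses (10)--(14) are then routine translations. (10) is part of the R-algebra signature. (11) follows from (22) by choosing $x$ itself as the common lower bound. (12) is a direct consequence of (23). For associativity (13), if $u$ is a common lower bound of $x,y,z$, then $u\leq q(x,y,u)$ and $u\leq q(y,z,u)$ by (20), so $u$ serves as the computation bound for both $(x\odot y)\odot z$ and $x\odot(y\odot z)$, and the equality follows from (24). For monotonicity (14), if $x\leq y$ and $u\leq x,z$, then also $u\leq y,z$, and (25) gives $q(x,z,u)\leq q(x\vee y,z,u)=q(y,z,u)$.

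The main obstacle will be relative adjointness (15). For both directions I would first use (29) to rewrite $(x\vee z)\odot(y\vee z)=q(x\vee z,y\vee z,z)=q(x,y,z)$ (the bound $z$ is valid since $z\leq x\vee z,y\vee z$). The forward direction is then short: assuming $q(x,y,z)\leq z$, combining with $z\leq q(x,y,z)$ from (20) forces $q(x,y,z)=z$, so (26) yields $x\vee z\leq y\rightarrow\big(q(x,y,z)\vee z\big)=y\rightarrow z$. The converse is the delicate step. From $x\vee z\leq y\rightarrow z$, applications of (25) and (23) give $q(x\vee z,y\vee z,z)\leq q(y\rightarrow z,y\vee z,z)=q(y\vee z,y\rightarrow z,z)$; by (29) together with $z\leq y\rightarrow z$ from (27), this equals $q(y,y\rightarrow z,z)$, which is at most $z$ by (28) applied with the substitution $x\mapsto y$, $y\mapsto z$. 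Finally, (16) coincides verbatim with (30), completing the verification.
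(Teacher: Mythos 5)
Your proof is correct. The well-definedness argument via (21), the verification of (10)--(14) and (16), and the handling of adjointness all check out; in particular the converse direction of (15) is sound, since $x\vee z\leq y\rightarrow z$ lets you apply (25) with $(x\vee z)\vee(y\rightarrow z)=y\rightarrow z$, and $(y\rightarrow z)\vee z=y\rightarrow z$ by (27) justifies the reduction to $q(y,y\rightarrow z,z)\leq z$ from (28).

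The one organizational difference from the paper is how adjointness is discharged. The paper does not verify (15) directly: it verifies the identities (17), (18), (19) (as translations of (26), (27), (28)) and relies on Theorem~\ref{th11}, which characterizes relatively residuated join-semilattices by (10)--(14), (16)--(19) in place of relative adjointness. You instead prove (15) directly, which amounts to inlining the proof of Theorem~\ref{th11} into this verification: your forward direction reproduces the step ``$(x\vee z)\odot(y\vee z)\leq z$ implies the join with $z$ collapses, so (26)/(17) applies,'' and your converse reproduces the chain $(x\vee z)\odot(y\vee z)\leq(y\rightarrow z)\odot(y\vee z)=(y\vee z)\odot(y\rightarrow z)\leq z$ in the language of $q$. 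Both routes are valid; the paper's buys brevity by reusing an already-established equivalence, while yours is self-contained and does not presuppose Theorem~\ref{th11}.
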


\begin{proof}
Let $\mathbb Q(\mathbf R)=(R,\vee,\odot,\rightarrow,1)$ and $a,b,c,d\in R$. Then $\odot$ is well-defined since in case $c,d\leq a,b$ we have
\[
q(a,b,c)=q(a\vee c\vee d,b\vee c\vee d,c)=q(a\vee c\vee d,b\vee c\vee d,c\vee d)=q(a\vee c\vee d,b\vee c\vee d,d)=q(a,b,d)
\]
by (21). Then because of (20), $c\leq q(a,b,c)=a\odot b$ whenever $c\leq a,b$.
\begin{enumerate}
\item[(10)] follows from the definition of an R-algebra.
\item[(11)] $a\odot1=q(a,1,a)=a$ and $1\odot a=q(1,a,a)=a$ by (22).
\item[(12)] Because of (23) we have $a\odot b=q(a,b,c)=q(b,a,c)=b\odot a$ whenever $c\leq a,b$.
\item[(13)] Because of (24) we have $(a\odot b)\odot c=q(q(a,b,d),c,d)=q(a,q(b,c,d),d)=a\odot(b\odot c)$ whenever $d\leq a,b,c$.
\item[(14)] Because of (25) we have $a\odot c=q(a,c,d)\leq q(a\vee b,c,d)=q(b,c,d)=b\odot c$ whenever $d\leq a\leq b$ and $d\leq c$.
\item[(16)] coincides with (30).
\item[(17)] Because of (26) and (29) we have
\[
a\vee c\leq b\rightarrow\big(q(a,b,c)\vee c\big)=b\rightarrow\big(q(a\vee c,b\vee c,c)\vee c\big)=b\rightarrow\big((a\vee c)\odot(b\vee c)\big)\vee c.
\]
\item[(18)] coincides with (27).
\item[(19)] Because of (27), (29) and (28) we have
\[
(a\vee b)\odot(a\rightarrow b)=(a\vee b)\odot\big((a\rightarrow b)\vee b\big)=q\big(a\vee b,(a\rightarrow b)\vee b,b)=q(a,a\rightarrow b,b)\leq b.
\]
\end{enumerate}
\end{proof}

\begin{theorem}\label{th9}
The correspondence described in Theorems~\ref{th7} and \ref{th8} is one-to-one.
\end{theorem}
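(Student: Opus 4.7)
The plan is to verify both compositions directly, using the defining formulas $q(x,y,z):=(x\vee z)\odot(y\vee z)$ and $x\odot y:=q(x,y,z)$ (for any common lower bound $z$ of $x$ and $y$), and leaning on identity (29) of Definition~\ref{def4} to absorb the extra joins with $z$.

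First, I would treat the direction $\mathbb{Q}\big(\mathbb{B}(\mathbf{R})\big)=\mathbf{R}$ for a relatively residuated join-semilattice $\mathbf{R}=(R,\vee,\odot,\rightarrow,1)$. Writing $\mathbb{B}(\mathbf{R})=(R,\vee,q,\rightarrow,1)$ and $\mathbb{Q}\big(\mathbb{B}(\mathbf{R})\big)=(R,\vee,\odot',\rightarrow,1)$, I take $a,b\in R$ with a common lower bound $c$. Then $c\leq a,b$ gives $a\vee c=a$ and $b\vee c=b$, so
\[
a\odot' b=q(a,b,c)=(a\vee c)\odot(b\vee c)=a\odot b.
\]
Since $\odot$ and $\odot'$ are partial operations defined exactly on pairs with a common lower bound, this shows $\odot'=\odot$ and hence $\mathbb{Q}\big(\mathbb{B}(\mathbf{R})\big)=\mathbf{R}$.

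Next, for the direction $\mathbb{B}\big(\mathbb{Q}(\mathbf{R})\big)=\mathbf{R}$ starting from an R-algebra $\mathbf{R}=(R,\vee,q,\rightarrow,1)$, I write $\mathbb{Q}(\mathbf{R})=(R,\vee,\odot,\rightarrow,1)$ and $\mathbb{B}\big(\mathbb{Q}(\mathbf{R})\big)=(R,\vee,q',\rightarrow,1)$. For arbitrary $a,b,c\in R$ the elements $a\vee c$ and $b\vee c$ share the common lower bound $c$, so by definition of $\odot$ in $\mathbb{Q}(\mathbf{R})$ and then identity (29),
\[
q'(a,b,c)=(a\vee c)\odot(b\vee c)=q(a\vee c,b\vee c,c)=q(a,b,c).
\]
Hence $q'=q$ and $\mathbb{B}\big(\mathbb{Q}(\mathbf{R})\big)=\mathbf{R}$.

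Both sides are essentially bookkeeping, and I do not anticipate a serious obstacle; the only subtlety is making sure that the ``pick any common lower bound'' in the definition of $\odot$ is matched with the correct instance so that (29) can be applied. Since $\mathbb{Q}(\mathbf{R})$ was already shown to be well-defined in Theorem~\ref{th8} via (21), this is unproblematic, and no further identities of Definition~\ref{def4} beyond (29) are needed for the one-to-one correspondence itself.
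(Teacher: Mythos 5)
Your proof is correct and follows essentially the same route as the paper: in one direction you use $a\vee c=a$, $b\vee c=b$ to collapse $q(a,b,c)=(a\vee c)\odot(b\vee c)$ to $a\odot b$, and in the other you use identity (29) to absorb the extra joins, with well-definedness delegated to Theorem~\ref{th8}. No gaps.
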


\begin{proof}
If $\mathbf R=(R,\vee,\odot,\rightarrow,1)$ is a relatively residuated join-semilattice and
\begin{align*}
                   \mathbb B(\mathbf R) & =(R,\vee,q,\rightarrow,1), \\
\mathbb Q\big(\mathbb B(\mathbf R)\big) & =(R,\vee,\otimes,\rightarrow,1)
\end{align*}
then $x\otimes y=q(x,y,z)=(x\vee z)\odot(y\vee z)=x\odot y$ for all $x,y,z\in I$ with $z\leq x,y$ and hence $\mathbb J\big(\mathbb A(\mathbf I)\big)=\mathbf I$. If, conversely, $\mathbf R=(R,\vee,q,\rightarrow,1)$ is an R-algebra and
\begin{align*}
                   \mathbb Q(\mathbf R) & =(R,\vee,\odot,\rightarrow,1), \\
\mathbb B\big(\mathbb Q(\mathbf R)\big) & =(R,\vee,Q,\rightarrow,1)
\end{align*}
then $Q(x,y,z)\approx(x\vee z)\odot(y\vee z)\approx q(x\vee z,y\vee z,z)\approx q(x,y,z)$ according to (29) and hence $\mathbb B\big(\mathbb Q(\mathbf R)\big)=\mathbf R$.
\end{proof}

By Theorems~\ref{th7}, \ref{th8} and \ref{th9} the variety $\mathcal W$ of R-algebras is equivalent to the class of relatively residuated join-semilattices.

\begin{theorem}
The variety $\mathcal W$ of {\rm R}-algebras is weakly regular and its subvariety determined by the additional identity $q(x,x\rightarrow y,y)\approx y$ is $3$-permutable.
\end{theorem}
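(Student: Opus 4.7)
The plan is to piggyback on the one-to-one correspondence between R-algebras and relatively residuated join-semilattices established in Theorems~\ref{th7}--\ref{th9}, which lets me freely import the properties of Theorem~\ref{th6} into the R-algebra signature; in particular the equivalence $x\rightarrow y = 1 \Leftrightarrow x \leq y$ holds in every R-algebra. For weak regularity I would mimic the corresponding part of Theorem~\ref{th12} and take the binary terms $t_1(x,y) := x\rightarrow y$ and $t_2(x,y) := y\rightarrow x$; then the conjunction $t_1(x,y) = t_2(x,y) = 1$ is equivalent to $x \leq y$ and $y \leq x$, hence to $x = y$, so the characterization of weakly regular varieties recalled just before Theorem~\ref{th12} applies.

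For 3-permutability of the subvariety defined by the extra identity $q(x, x\rightarrow y, y) \approx y$, which is precisely divisibility transcribed into the R-algebra language (since $y \leq x\rightarrow y$ by (27) absorbs the $y$ added in the second slot of $q$), I would take
\[
t_1(x,y,z) := q(z, y\rightarrow x, x), \qquad t_2(x,y,z) := q(x, y\rightarrow z, z),
\]
in direct analogy with the 3-permutability terms used for I-algebras in Theorem~\ref{th12}. The two outer identities $t_1(x,y,y) \approx x$ and $t_2(x,x,y) \approx y$ are essentially instances of the extra identity (for the first, swap the roles of $x$ and $y$).

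The middle identity $t_1(x,x,y) \approx t_2(x,y,y)$ is the only step that requires real work and is therefore the main obstacle. First, one applies Theorem~\ref{th6}(i) to $x \leq x$ to deduce $x\rightarrow x \approx 1$, which reduces both sides to expressions of the shape $q(a, 1, b)$. The crux is then the auxiliary identity $q(a, 1, b) \approx a \vee b$, which I would establish inside $\mathcal W$ by first rewriting $q(a, 1, b) \approx q(a \vee b, 1, b)$ via (29), then applying (21) with a suitable substitution to lift the third coordinate from $b$ to $a \vee b$, and finally invoking (22). Both $t_1(x,x,y)$ and $t_2(x,y,y)$ then collapse to $x \vee y$, completing the verification.
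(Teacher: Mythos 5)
Your proposal is correct and follows essentially the same route as the paper, which simply states that the proof is analogous to that of Theorem~\ref{th12}, uses (i) of Theorem~\ref{th6}, and identifies the extra identity with divisibility. In fact you supply more detail than the paper does, in particular the derivation of $q(a,1,b)\approx a\vee b$ from (29), (21) and (22), which is exactly the step the paper leaves implicit and which does go through as you describe.
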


\begin{proof}
The proof is analogous to that of Theorem~\ref{th12} and uses also (i) of Theorem~\ref{th6}. Since $y\leq x\rightarrow y$ according to (18), the identity $q(x,x\rightarrow y,y)\approx y$ is equivalent to the identity
\[
(x\vee y)\odot(x\rightarrow y)\approx y
\]
which is nothing else than divisibility of the corresponding relatively residuated join-semilattice.
\end{proof}

{\bf Declaration of competing interest}

The authors declare that they have no known competing financial interests or personal relationships that could have appeared to influence the work reported in this paper.

Authors' addresses:

Ivan Chajda \\
Palack\'y University Olomouc \\
Faculty of Science \\
Department of Algebra and Geometry \\
17.\ listopadu 12 \\
771 46 Olomouc \\
Czech Republic \\
ivan.chajda@upol.cz

Helmut L\"anger \\
TU Wien \\
Faculty of Mathematics and Geoinformation \\
Institute of Discrete Mathematics and Geometry \\
Wiedner Hauptstra\ss e 8-10 \\
1040 Vienna \\
Austria, and \\
Palack\'y University Olomouc \\
Faculty of Science \\
Department of Algebra and Geometry \\
17.\ listopadu 12 \\
771 46 Olomouc \\
Czech Republic \\
helmut.laenger@tuwien.ac.at

\begin{thebibliography}{99}
\bibitem{A67}
J.~C.~Abbott, Semi-boolean algebra. Mat.\ Vesnik {\bf4} (1967), 177-198.
\bibitem{A76}
J.~C.~Abbott, Orthoimplication algebras. Studia Logica {\bf35} (1976), 173--177.
\bibitem{Ba}
R.~Balbes, On free pseudo-complemented and relatively pseudo-complemented semi-lattices. Fund.\ Math.\ {\bf78} (1973), 119--131.
\bibitem{Bi}
G.~Birkhoff, Lattice Theory. AMS, Providence, R.I., 1979. ISBN 0-8218-1025-1.
\bibitem{C95}
I.~Chajda, Semi-implication algebra. Tatra Mt.\ Math.\ Publ.\ {\bf5} (1995), 13--24.
\bibitem{C03a}
I.~Chajda, An extension of relative pseudocomplementation to non-distributive lattices. Acta Sci.\ Math.\ (Szeged) {\bf69} (2003), 491--496.
\bibitem{C03b}
I.~Chajda, Lattices and semilattices having an antitone involution in every upper interval. Comment.\ Math.\ Univ.\ Carolin.\ {\bf44} (2003), 577--585.
\bibitem{CEL}
I.~Chajda, G.~Eigenthaler and H.~L\"anger, Congruence Classes in Universal Algebra.	Heldermann, Lemgo 2012.	ISBN 3-88538-226-1.
\bibitem{CHL01}
I.~Chajda, R.~Hala\v s and H.~L\"anger, Orthomodular implication algebras. Internat.\ J.\ Theoret.\ Phys.\ {\bf40} (2001), 1875--1884.
\bibitem{CHL04}
I.~Chajda, R.~Hala\v s and H.~L\"anger, Simple axioms for orthomodular implication algebras. Internat.\ J.\ Theoret.\ Phys.\ {\bf43} (2004), 911--914.
\bibitem{CKL}
I.~Chajda, J.~K\"uhr and H.~L\"anger, Relatively residuated lattices and posets. Math.\ Slovaca {\bf70} (2020), 239--250.
\bibitem{CLP}
I.~Chajda, H.~L\"anger and J.~Paseka, Sectionally pseudocomplemented posets. Order (2021). DOI 10.1007/s11083-021-09555-6.
\bibitem K
P.~K\"ohler, Brouwerian semilattices. Trans.\ Amer.\ Math.\ Soc.\ {\bf268} (1981), 103--126.
\bibitem M
A.~Mitschke, Implication algebras are $3$-permutable and $3$-distributive. Algebra Universalis {\bf1} (1971/72), 182--186.
\bibitem N
W.~C.~Nemitz, Semi-Boolean lattices. Notre Dame J.\ Formal Logic {\bf10} (1969), 235--238.
\bibitem{NW71}
W.~Nemitz and T.~Whaley, Varieties of implicative semilattices. Pacific J.\ Math.\ {\bf37} (1971), 759--769.
\bibitem{NW73}
W.~Nemitz and T.~Whaley, Varieties of implicative semilattices. II. Pacific J.\ Math.\ {\bf45} (1973), 301--311.
\end{thebibliography}
\end{document}